\newtheorem{theorem}{Theorem}[section]
\newtheorem{lemma}{Lemma}[section]
\newtheorem{corollary}{Corollary}[section]
\let\oldproofname=\proofname
\renewcommand{\proofname}{\rm\bf{\oldproofname}}
\begin{document}

\title{Semigroup of transformations with restricted partial range: Regularity, abundance and some combinatorial results}

\author{\name Jiulin Jin\\
       \addr College of Mathematics and Information Science\\
             Guiyang University\\
             Guiyang, Guizhou, 550005\\
             China
             \\\email{j.l.jin@hotmail.com}
             \AND \name Taijie You
       \\\addr College of Mathematical Sciences \\
       Guizhou Normal University\\
       Guiyang, Guizhou, 550025\\
             China
       \\\email{youtaijie1111@163.com}}


\maketitle

\begin{abstract}
Suppose that $X$ be a nonempty set. Denote by $\mathcal{T}(X)$ the full transformation semigroup on $X$. For $\varnothing \neq Z\subseteq Y\subseteq X$, let $\mathcal{T}(X,Y,Z)=\{\alpha \in \mathcal{T}(X): Y\alpha \subseteq Z \}$. Then $\mathcal{T}(X,Y,Z)$ is a subsemigroup of $\mathcal{T}(X)$. In this paper, we characterize the regular elements of the semigroup $\mathcal{T}(X,Y,Z)$, and present a necessary and sufficient condition under which $\mathcal{T}(X,Y,Z)$ is regular. Furthermore, we investigate the abundance of the semigroup $\mathcal{T}(X,Y,Z)$ for the case $Z\subsetneq Y\subsetneq X$. In addition, we compute the cardinalities of $\mathcal{T}(X,Y,Z)$, ${\rm Reg}(\mathcal{T}(X,Y,Z))$ and ${\rm E}(\mathcal{T}(X,Y,Z))$ when $X$ is finite, respectively.
\end{abstract}

\begin{keywords}
transformation semigroup, restricted partial range, regular element, $\mathcal{L}^{\ast}$-relation, $\mathcal{R}^{\ast}$-relation.
\end{keywords}

\section{Introduction}\label{intro}

An element $x$ of semigroup $S$ is called a \emph {regular element} of $S$ if $x=xyx$ for some $y\in S$, and $S$ is said to be a \emph {regular semigroup} if every element of $S$ is regular. The set of all regular elements of a semigroup $S$ is denoted by ${\rm Reg}(S)$. An element $x$ of semigroup $S$ is called an \emph {idempotent} of $S$ if $x^2=x$. The set of all idempotents of a semigroup $S$ is denoted by ${\rm E}(S)$. Regular element (resp., idempotent) is one of the most studied topics in semigroup theory due to its nice algebraic properties and wide applications. There have been many research works studying regularity of semigroups (see \cite{Araujo,J,K,M1,N,P,S1,S2,Y}).

Let $S$ be a semigroup and $a, b\in S$. We say that $a$ and $b$ are \emph {$\mathcal{L}$-related} (\emph {$\mathcal{R}$-related}) in $S$ if $S^1a=S^1b$ ($aS^1=bS^1$) where $S^1$ denotes the monoid obtained from $S$ by adding an identity if $S$ has no identity, otherwise, $S^1 = S$. If $a$ and $b$ are $\mathcal{L}$-related ($\mathcal{R}$-related), we can write $(a,b)\in \mathcal{L}$ ($(a,b)\in \mathcal{R}$). Again, if $(a,b)\in \mathcal{L}$ in some oversemigroup of $S$, then $a$ and $b$ are called \emph {$\mathcal{L}^{\ast}$-related} and write $(a,b)\in \mathcal{L}^{\ast}$. The relation $\mathcal{R}^{\ast}$ can be defined dually. Clearly, $\mathcal{L}\subseteq \mathcal{L}^{\ast}$, $\mathcal{R}\subseteq \mathcal{R}^{\ast}$, and $\mathcal{L}^{\ast}$ and $\mathcal{R}^{\ast}$ are equivalence relations on $S$. Fountain \cite{Fountain} pointed out that a semigroup $S$ is said to be \emph {left abundant} (\emph {right abundant}) if every $\mathcal{L}^{\ast}$-class ($\mathcal{R}^{\ast}$-class) contains an idempotent. Moreover, a semigroup $S$ is called \emph {abundant} if it is both left abundant and right abundant. It is obvious that regular semigroups are abundant, but the converse is not true. For example, Umar \cite{Umar} shown that the semigroup of order-decreasing finite full transformations is abundant but not regular. Many papers have been written describing the abundance of various semigroups.

For a nonempty set $X$, let $\mathcal{T}(X)$ be the full transformation semigroup on $X$ that is, the semigroup under composition of all maps from $X$ into itself. It is well known that $\mathcal{T}(X)$ is a regular semigroup (see \cite{H1}). Transformation semigroups are ubiquitous in semigroup theory because of Cayley's Theorem which states that every semigroup $S$ embeds in some transformation semigroup $\mathcal{T}(X)$ (see \cite[Theorem 1.1.2]{H1}).

Given a nonempty subset $Y$ of $X$, let
$$\mathcal{\overline{T}}(X,Y)=\{\alpha \in \mathcal{T}(X): Y\alpha \subseteq Y\},~~~~\mathcal{T}(X,Y)=\{\alpha \in \mathcal{T}(X): X\alpha \subseteq Y\}.$$
Then $\mathcal{\overline{T}}(X,Y)$ is a subsemigroup of $\mathcal{T}(X)$ and $\mathcal{T}(X,Y)$ is a subsemigroup of $\mathcal{\overline{T}}(X,Y)$. In 1966, Magill \cite{M2} introduced and studied the semigroup $\mathcal{\overline{T}}(X,Y)$. In 1975, Symons \cite{S6} introduced the semigroup $\mathcal{T}(X,Y)$, and also described all automorphisms of $\mathcal{T}(X,Y)$. Recently, $\mathcal{\overline{T}}(X,Y)$ and $\mathcal{T}(X,Y)$ have been studied in a variety of contexts (see \cite{H2,Jin1,Jin2,N,S1,S2,S3,S4,S5,Yan}).

The study of the related combinatorial properties of subsemigroups of finite full transformation semigroup has always been one of the most important topics in the semigroup theory. Many scholars have obtained results (see \cite{A,B1,F,G}). Although they have studied semigroups $\mathcal{\overline{T}}(X,Y)$ and $\mathcal{T}(X,Y)$ from different perspectives, very little research has been found to deal with other literatures have studied other related combinatorial properties of semigroups $\mathcal{\overline{T}}(X,Y)$ and $\mathcal{T}(X,Y)$ except that  Nenthein, Youngkhong and Kemprasit \cite{N} determined the number of all regular elements in $\mathcal{\overline{T}}(X,Y)$ and $\mathcal{T}(X,Y)$.

For $X$, $Y$ and $Z$ are all nonempty sets with $Z\subseteq Y\subseteq X$, the first author \cite{Jin} defined
$$\mathcal{T}(X,Y,Z)=\{\alpha \in \mathcal{T}(X): Y\alpha \subseteq Z \}.$$
Clearly, for each $\alpha ,\beta \in \mathcal{T}(X,Y,Z)$, $Y(\alpha \beta )=(Y\alpha )\beta \subseteq Z\beta \subseteq Y\beta \subseteq Z$ and so $\alpha \beta\in \mathcal{T}(X,Y,Z)$. Therefore, we have $\mathcal{T}(X,Y,Z)$ is a subsemigroup of $\mathcal{T}(X)$, and we call it \emph {the semigroup of transformations with restricted partial range} on $X$. The semigroup $\mathcal{T}(X,Y,Z)$ is a generalization of semigroups $\mathcal{T}(X)$, $\mathcal{\overline{T}}(X,Y)$ and $\mathcal{T}(X,Z)$, that is,

$\bullet $ if $Z=Y$, then $\mathcal{T}(X,Y,Z)=\mathcal{\overline{T}}(X,Y)$;

$\bullet $ if $Y=X$, then $\mathcal{T}(X,Y,Z)=\mathcal{T}(X,Z)$;

$\bullet $ if $Z=Y=X$, then $\mathcal{T}(X,Y,Z)=\mathcal{T}(X)$.

For the case $Z=Y=X$, it is well known that $\mathcal{T}(X,Y,Z)=\mathcal{T}(X)$ is a regular semigroup and so $\mathcal{T}(X,Y,Z)$ is abundant.

For the case $Z=Y\subsetneq X$, Sun \cite{S3} shown the following result.

\begin{lemma}\label{L1.1}{\rm(\cite[Theorem 4.2]{S3})} The semigroup $\mathcal{\overline{T}}(X,Y)$ is abundant.
\end{lemma}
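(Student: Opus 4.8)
The plan is to derive abundance from two ingredients: a concrete description of the relations $\mathcal{L}^{\ast}$ and $\mathcal{R}^{\ast}$ on $S=\mathcal{\overline{T}}(X,Y)$, and an explicit idempotent exhibited inside each such class. I expect the starred relations on $S$ to be controlled by exactly the same data as Green's relations of the ambient regular semigroup $\mathcal{T}(X)$; precisely, for $\alpha,\beta\in S$ I would aim to show
\[\alpha\,\mathcal{L}^{\ast}\,\beta \iff X\alpha=X\beta,\qquad \alpha\,\mathcal{R}^{\ast}\,\beta \iff \ker\alpha=\ker\beta.\]
Granting these, left abundance reduces to producing, for each $\alpha$, an idempotent of $S$ with image $X\alpha$, and right abundance to producing an idempotent of $S$ with kernel $\ker\alpha$.

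To establish the characterizations I would use the internal cancellation form of the starred relations (equivalently, exploit that $\mathcal{T}(X)$ is a convenient regular oversemigroup). Since the action is on the right, $\alpha\gamma=\alpha\delta$ holds iff $\gamma,\delta$ agree on $X\alpha$, while $\gamma\alpha=\delta\alpha$ holds iff $t\gamma$ and $t\delta$ lie in the same $\ker\alpha$-class for every $t$; sufficiency in both equivalences is then immediate. For necessity in the $\mathcal{L}^{\ast}$-case, if $p\in X\alpha\setminus X\beta$ I would compare the identity with the map agreeing with it everywhere except that it moves $p$ to a different point (routed inside $Y$ when $p\in Y$): this pair lies in $S$, agrees on $X\beta$ but differs on $X\alpha$, so $\alpha$ and $\beta$ are not $\mathcal{L}^{\ast}$-related. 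For the $\mathcal{R}^{\ast}$-case, if $\ker\alpha\neq\ker\beta$ pick $p,q$ with $p\alpha=q\alpha$ but $p\beta\neq q\beta$; when $p,q\in Y$ the constant maps onto $p$ and $q$ belong to $S$ and separate $\beta$ while identifying $\alpha$, and otherwise one reroutes a single point of $X\setminus Y$ to $p$ resp.\ $q$, which again stays in $S$. A few degenerate cases (such as $|Y|=1$) should be checked separately.

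For left abundance, fix $\alpha\in S$ and set $A=X\alpha$. The key observation is $A\cap Y\supseteq Y\alpha\neq\varnothing$, so I may choose a base point $y_{0}\in A\cap Y$ and define $\epsilon$ to fix $A$ pointwise, send each point of $Y\setminus A$ to $y_{0}$, and send every remaining point into $A$; then $\epsilon^{2}=\epsilon$, $X\epsilon=A=X\alpha$, and $Y\epsilon\subseteq Y$, so $\epsilon\in \mathrm{E}(S)$ sits in the $\mathcal{L}^{\ast}$-class of $\alpha$. For right abundance, choose a transversal of $\ker\alpha$, taking the representative inside $Y$ whenever the class meets $Y$ (possible exactly because such a class contains a point of $Y$); collapsing each class onto its representative yields an idempotent $\epsilon\in S$ with $\ker\epsilon=\ker\alpha$, hence in the $\mathcal{R}^{\ast}$-class of $\alpha$. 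Together these show every $\mathcal{L}^{\ast}$- and every $\mathcal{R}^{\ast}$-class of $S$ meets $\mathrm{E}(S)$, so $S$ is abundant.

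The main obstacle is not the abstract step but the bookkeeping that keeps the constructed maps inside $S$: everything hinges on the two membership facts $X\alpha\cap Y\neq\varnothing$ and that every $\ker\alpha$-class meeting $Y$ has a representative in $Y$. These are precisely where the defining constraint $Y\alpha\subseteq Y$ is used, and they are also what makes the separating pairs in the characterization step legitimate elements of $S$. I would expect the necessity direction of the characterizations—showing $\mathcal{L}^{\ast}$ and $\mathcal{R}^{\ast}$ are no coarser than equality of image, respectively kernel—to demand the most care, since there the test maps must be produced explicitly within $S$ rather than merely within $\mathcal{T}(X)$.
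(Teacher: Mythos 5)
The paper never proves this lemma: it is imported by citation from Sun and Wang \cite{S3}, so there is no internal proof to compare yours against. Your argument is correct, and it is essentially the standard one---the same template this paper itself deploys in Section 3 for the new case $Z\subsetneq Y\subsetneq X$: characterize $\mathcal{L}^{\ast}$ by equality of ranges and $\mathcal{R}^{\ast}$ by equality of kernel partitions (sufficiency via the regular oversemigroup $\mathcal{T}(X)$, necessity via explicit test maps, exactly as in Lemmas 3.3 and 3.6), then exhibit an idempotent of $\mathcal{\overline{T}}(X,Y)$ with prescribed range, respectively prescribed partition. All of your test maps and idempotents do lie in $\mathcal{\overline{T}}(X,Y)$ for the reasons you give, and the degenerate case you flag is in fact vacuous: if $|Y|=1$ and $p\in X\alpha\setminus X\beta$ with $p\in Y$, then $p$ is fixed by $\beta$, hence $p\in X\beta$, a contradiction. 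One point worth making explicit, since it is the conceptual heart of the matter: your separating maps (near-identities, constants at points of $Y$, reroutings of points of $X\setminus Y$) belong to the semigroup only because the defining constraint is invariance $Y\alpha\subseteq Y$; once $Y$ must map into a proper subset $Z$, those maps disappear, the $\mathcal{L}^{\ast}$-characterization changes (Lemma 3.3($i$)), idempotents can no longer have range meeting $Y\setminus Z$ (Lemma 3.4), and left abundance actually fails (Theorem 3.8). So your proof is sound, is not transportable beyond $Z=Y$ for exactly the reason you identified, and, modulo presentation, coincides with the argument in the cited source.
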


For the case $Z\subsetneq Y=X$. Then $\mathcal{T}(X,Y,Z)=\mathcal{T}(X,Z)$ contains exactly one element if $|Z|=1$. And if $|Z|\geq 2$, Sun \cite{S4} presented the following result.

\begin{lemma}\label{L1.2}{\rm(\cite[Theorem 1]{S4})} The semigroup $\mathcal{T}(X,Z)$ is left abundant but not right abundant.
\end{lemma}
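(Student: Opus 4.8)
The plan is to treat the two assertions separately, relying on explicit descriptions of $\mathcal{L}^{\ast}$ and $\mathcal{R}^{\ast}$ on $S:=\mathcal{T}(X,Z)$ obtained from the standard cancellation criteria: $(\alpha,\beta)\in\mathcal{L}^{\ast}$ iff for all $\gamma,\delta\in S^{1}$ one has $\alpha\gamma=\alpha\delta\Leftrightarrow\beta\gamma=\beta\delta$, and dually for $\mathcal{R}^{\ast}$. I would first record that, since $x(\alpha\gamma)=(x\alpha)\gamma$ and $X\alpha\subseteq Z$, the equality $\alpha\gamma=\alpha\delta$ holds exactly when $\gamma$ and $\delta$ agree on $X\alpha$. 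This makes it routine to show $(\alpha,\beta)\in\mathcal{L}^{\ast}\Leftrightarrow X\alpha=X\beta$: one direction is immediate, and for the converse, given $z_{0}\in X\alpha\setminus X\beta$ one uses $|Z|\ge 2$ to build two maps into $Z$ that agree off $z_{0}$ but differ at $z_{0}$, thereby separating $\alpha$ from $\beta$.

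Granting this, left abundance is quick. Given $\alpha\in S$ put $A:=X\alpha\subseteq Z$, fix $a_{0}\in A$, and define $\varepsilon$ to fix $A$ pointwise and send every point of $X\setminus A$ to $a_{0}$. Then $\varepsilon^{2}=\varepsilon$, $X\varepsilon=A\subseteq Z$ so $\varepsilon\in {\rm E}(S)$, and $X\varepsilon=X\alpha$ gives $(\alpha,\varepsilon)\in\mathcal{L}^{\ast}$; hence every $\mathcal{L}^{\ast}$-class meets ${\rm E}(S)$.

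For the failure of right abundance the key observation, and the crux of the argument, is that left multiplication by members of $S$ can only ``see'' $\alpha$ on $Z$: for $\gamma\in S$ we have $x\gamma\in Z$, so $\gamma\alpha$ depends on $\alpha$ only through $\alpha|_{Z}$, and $\gamma\alpha=\delta\alpha$ (with $\gamma,\delta\in S$) reduces to a condition read off from $\ker\alpha\cap(Z\times Z)$. Thus the restriction of $\ker\alpha$ to $Z$ is an $\mathcal{R}^{\ast}$-invariant, while the only probe in $S^{1}$ that detects the behaviour of $\alpha$ on $X\setminus Z$ is the adjoined identity. I would exploit this by choosing, using $|Z|\ge 2$ and $Z\subsetneq X$, distinct $z_{1},z_{2}\in Z$ and a point $p\in X\setminus Z$, and setting $x\alpha=z_{1}$ for all $x\neq p$ and $p\alpha=z_{2}$. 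Then $\alpha\in S$, the map $\alpha|_{Z}$ is constant, yet $\alpha$ itself is non-constant.

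It remains to show the $\mathcal{R}^{\ast}$-class of this $\alpha$ avoids ${\rm E}(S)$. Suppose $\varepsilon\in {\rm E}(S)$ with $(\varepsilon,\alpha)\in\mathcal{R}^{\ast}$. Since $\ker\alpha$ collapses all of $Z$, the invariant above forces $\ker\varepsilon$ to collapse $Z$ as well; as $\varepsilon$ is idempotent with image in $Z$ and fixes its image pointwise, a short argument then shows $\varepsilon$ must be a constant map to some $c\in Z$. But for a constant $\varepsilon$ one has $\gamma\varepsilon=\varepsilon=1\cdot\varepsilon$ for every $\gamma\in S$, whereas for our $\alpha$ every $\gamma\in S$ gives $\gamma\alpha\equiv z_{1}$ while $1\cdot\alpha=\alpha\not\equiv z_{1}$, so $\gamma\alpha\neq 1\cdot\alpha$; applying the $\mathcal{R}^{\ast}$ criterion to the pair $(\gamma,1)$ then contradicts $(\varepsilon,\alpha)\in\mathcal{R}^{\ast}$. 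Hence no idempotent is $\mathcal{R}^{\ast}$-related to $\alpha$, and $S$ is not right abundant. The main obstacle throughout is pinning down $\mathcal{R}^{\ast}$ precisely, namely recognising that the part of $\ker\alpha$ lying outside $Z$ is invisible to left multiplication by $S$ and that the adjoined identity is exactly the device separating the non-constant $\alpha$ from the constant idempotent.
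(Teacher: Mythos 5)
Your proposal is correct, but note that the paper itself gives no proof of this statement: Lemma \ref{L1.2} is imported verbatim from Sun \cite[Theorem 1]{S4}, so the only material to compare against is the analogous machinery the paper develops in Section 3 for the case $Z\subsetneq Y\subsetneq X$. Your argument parallels that machinery where it can (image characterization of $\mathcal{L}^{\ast}$ as in Lemma \ref{L3.3}($ii$), idempotent fixing a prescribed image $A\subseteq Z$ for left abundance), but it necessarily diverges on the $\mathcal{R}^{\ast}$ side, and this is the genuinely interesting part of what you did. The paper's Lemma \ref{L3.6} identifies $\mathcal{R}^{\ast}$ with equality of kernels, but its proof builds probes that act as the identity on $X\setminus Y$; when $Y=X$ every probe in $\mathcal{T}(X,Z)$ has image inside $Z$, so that characterization is unavailable, and indeed false. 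You correctly isolate the right invariant, namely $\ker\alpha\cap(Z\times Z)$ together with the adjoined identity as the only probe seeing $X\setminus Z$, choose the witness $\alpha$ (constant $z_1$ on $Z$ and on all of $X\setminus\{p\}$, with $p\alpha=z_2$ for $p\notin Z$), force any $\mathcal{R}^{\ast}$-related idempotent to be constant, and then get the contradiction from the pair $(\gamma,1)$: $\gamma\varepsilon=1\cdot\varepsilon$ always, while $\gamma\alpha\equiv z_1\neq\alpha=1\cdot\alpha$. All steps check out; the only place you should be more explicit in a final write-up is the claim that $\ker\alpha\cap(Z\times Z)$ is an $\mathcal{R}^{\ast}$-invariant, which needs the constant-map probes $\gamma\equiv z$, $\delta\equiv z'$ spelled out (given $z\alpha=z'\alpha$, then $\gamma\alpha=\delta\alpha$, hence $\gamma\varepsilon=\delta\varepsilon$, hence $z\varepsilon=z'\varepsilon$); this is routine and clearly within the mechanism you describe. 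Your proof also makes visible exactly where the hypotheses $|Z|\geq 2$ (to separate images for $\mathcal{L}^{\ast}$ and to have $z_1\neq z_2$) and $Z\subsetneq X$ (to have the point $p$) enter, which the paper's citation leaves implicit.
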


The paper is organized as follows. In Section 2, we characterize the regular elements of the semigroup $\mathcal{T}(X,Y,Z)$, and present a necessary and sufficient condition under which $\mathcal{T}(X,Y,Z)$ is regular. In Section 3, we investigate the abundance of the semigroup $\mathcal{T}(X,Y,Z)$ for the case $Z\subsetneq Y\subsetneq  X$. In Section 4, we compute the cardinalities of $\mathcal{T}(X,Y,Z)$, ${\rm Reg}(\mathcal{T}(X,Y,Z))$ and ${\rm E}(\mathcal{T}(X,Y,Z))$ when $X$ is finite, respectively. All combinatorial formulas in $\mathcal{T}(X,Y,Z)$ also apply to the semigroup $\mathcal{\overline{T}}(X,Y)$ (resp. $\mathcal{T}(X,Z)$ or $\mathcal{T}(X)$). 

Throughout this paper, we always write functions on the right; in particular, this means that for a composition $\alpha \beta $, $\alpha$ is applied first. For any sets $A$ and $B$, we denote by $|A|$ the cardinality of $A$, and write $A\setminus B=\{a\in A: a\notin B\}$. For each $\alpha \in \mathcal{T}(X,Y,Z)$, we denote by $X\alpha $ the range of $\alpha $. And if $A$ is a nonempty subset of $X$ then the restriction of $\alpha$ to the set $A$ is denoted
by $\alpha |_A$. Moreover, for the general background of Semigroup Theory and standard notation, we refer the readers to Howie's book \cite{H1}.

\section{Regularity}
In this section, we characterize the regularity of the semigroup $\mathcal{T}(X,Y,Z)$. First, we describe the regular elements of the semigroup $\mathcal{T}(X,Y,Z)$.

\begin{theorem} \label{T2.1} Let $\alpha \in \mathcal{T}(X,Y,Z)$. Then the following conditions are equivalent:
\\[8pt]
{\rm($i$)} $\alpha \in {\rm Reg}(\mathcal{T}(X,Y,Z))$.
\\[8pt]
{\rm($ii$)} $X\alpha \cap Y\subseteq Z\alpha $.
\\[8pt]
{\rm($iii$)} $X\alpha \cap Y= Z\alpha $.
\end{theorem}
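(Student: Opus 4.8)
The plan is to establish the chain $(i) \Rightarrow (ii) \Rightarrow (iii) \Rightarrow (i)$. Recall that we write functions on the right, so for $\alpha,\beta \in \mathcal{T}(X,Y,Z)$ the composite $\alpha\beta$ applies $\alpha$ first. Throughout I will use the defining condition $Y\alpha \subseteq Z$ repeatedly.

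First I would prove $(i) \Rightarrow (ii)$. Suppose $\alpha$ is regular, so $\alpha = \alpha\beta\alpha$ for some $\beta \in \mathcal{T}(X,Y,Z)$. Take any $y \in X\alpha \cap Y$. Since $y \in X\alpha$, write $y = x\alpha$ for some $x \in X$; since also $y \in Y$ and $\beta \in \mathcal{T}(X,Y,Z)$, we have $y\beta \in Y\beta \subseteq Z$. Now apply the regularity relation at $x$: $x\alpha = x(\alpha\beta\alpha) = (x\alpha)\beta\alpha = (y\beta)\alpha \in Z\alpha$, using that $y\beta \in Z$. Hence $y = x\alpha \in Z\alpha$, proving $X\alpha \cap Y \subseteq Z\alpha$.

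Next, $(ii) \Rightarrow (iii)$ should be the easy inclusion-completing step. Since $Z \subseteq Y$, we have $Z\alpha \subseteq Y\alpha \subseteq Z$, and also $Z\alpha \subseteq X\alpha$; moreover $Z\alpha \subseteq Z \subseteq Y$, so $Z\alpha \subseteq X\alpha \cap Y$. Combining this with the hypothesis $X\alpha \cap Y \subseteq Z\alpha$ from $(ii)$ yields the equality $X\alpha \cap Y = Z\alpha$ of $(iii)$.

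The real work is in $(iii) \Rightarrow (i)$, where I must construct an explicit $\beta \in \mathcal{T}(X,Y,Z)$ with $\alpha\beta\alpha = \alpha$. The idea is to define $\beta$ by choosing, for each element $z \in Z\alpha$, a preimage in $Z$ (possible since $Z\alpha$ is by definition the image of $Z$ under $\alpha$); this handles points of $X\alpha$ that lie in $Y$. Concretely, for each $w \in X\alpha$ pick $x_w \in X$ with $x_w\alpha = w$, but I must arrange that whenever $w \in Y$ the chosen preimage actually lies in $Z$, so that $\beta$ respects $Y\beta \subseteq Z$. Hypothesis $(iii)$ is exactly what guarantees this: if $w \in X\alpha \cap Y$ then $w \in Z\alpha$, so a preimage in $Z$ exists. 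For $x \notin X\alpha$ I would define $x\beta$ to be some fixed element of $Z$, ensuring $\beta \in \mathcal{T}(X,Y,Z)$ overall. The main obstacle will be verifying the membership $\beta \in \mathcal{T}(X,Y,Z)$, i.e. checking $Y\beta \subseteq Z$ case-by-case: for $y \in Y \cap X\alpha$ the preimage was chosen in $Z$ by the argument above, while for $y \in Y \setminus X\alpha$ the value falls in the fixed element of $Z$. Once $\beta$ is in the semigroup, the identity $x(\alpha\beta\alpha) = x\alpha$ follows because $(x\alpha)\beta$ is a chosen preimage of $x\alpha$ under $\alpha$, hence $((x\alpha)\beta)\alpha = x\alpha$. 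I would use the Axiom of Choice freely to select the preimages, since $X$ may be infinite.
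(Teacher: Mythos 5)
Your proposal is correct and follows essentially the same route as the paper: the cycle $(i)\Rightarrow(ii)\Rightarrow(iii)\Rightarrow(i)$, with the regularity identity $\alpha=\alpha\beta\alpha$ combined with $Y\beta\subseteq Z$ giving $(ii)$, the trivial inclusions $Z\alpha\subseteq Y\alpha\subseteq X\alpha\cap Y$ giving $(iii)$, and for $(iii)\Rightarrow(i)$ the same construction of $\beta$ by choosing preimages in $Z$ for points of $X\alpha\cap Y$ (which exist exactly by $(iii)$), arbitrary preimages for points of $X\alpha\setminus Y$, and a fixed element of $Z$ elsewhere. The only cosmetic difference is that you treat the cases $X\alpha\setminus Y=\varnothing$ and $X\alpha\setminus Y\neq\varnothing$ uniformly, whereas the paper splits them into two separately defined maps.
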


\begin{proof} ($i$) $\Rightarrow $($ii$). Let $\alpha \in {\rm Reg}(\mathcal{T}(X,Y,Z))$. Then exists $\beta \in \mathcal{T}(X,Y,Z)$ such that $\alpha =\alpha \beta \alpha $. For each $x\in X\alpha \cap Y$, we have $x\in Y$ and $x=a\alpha $ for some $a\in X$. Consequently, $x=a\alpha =a\alpha\beta\alpha=x\beta\alpha \in Y\beta\alpha\subseteq Z\alpha $ and so ($ii$) holds.

($ii$)$\Rightarrow $($iii$). It is obvious that $Z\alpha \subseteq Y\alpha \subseteq X\alpha \cap Z\subseteq X\alpha \cap Y$, together with condition ($ii$), we get ($iii$).

($iii$)$\Rightarrow $($i$). Suppose that $X\alpha \cap Y= Z\alpha $, and let
$$X\alpha\cap Y=\{\overline{y_1},\overline{y_2},\cdots ,\overline{y_s}\}.$$
Then exist $z_i\in Z~(i=1,2,\cdots ,s)$ such that $z_i\alpha =\overline{y_i}$. We consider two cases. If $X\alpha \setminus Y=\varnothing$, we define a mapping $\beta :X\rightarrow X$ by
$$x\beta = \left\{ \begin{array}{rl}
z_i         &\mbox{ if $x=\overline{y_i}$ for some $i=1,2,\cdots ,s$  }\\
z_1         &\mbox{ otherwise.} \end{array}  \right. $$
If $X\alpha \setminus Y\neq \varnothing$. Then, for each $x\in X\alpha\setminus Y$, choose and fix $t_x\in \{k\in X: k\alpha =x \}$, and define a mapping $\beta :X\rightarrow X$ by
$$x\beta = \left\{ \begin{array}{rl}
z_i         &\mbox{ if $x=\overline{y_i}$ for some $i=1,2,\cdots ,s$  }\\
t_x           &\mbox{ if $x\in X\alpha\setminus Y$  }\\
z_1         &\mbox{ otherwise.} \end{array}  \right. $$
For both cases, it is easy to verify that $\alpha =\alpha \beta \alpha $ and $\beta \in \mathcal{T}(X,Y,Z)$. Hence ($i$) holds.
\end{proof}

In particular, we take $Z=Y$ (resp., $Y=X$) in Theorem \ref{T2.1}. Then we get the following Corollary \ref{C2.2} (resp., Corollary \ref{C2.3}) which are proved by Nenthein, Youngkhong and Kemprasit \cite[Theorem 2.1]{N} (resp., \cite[Theorem 2.3]{N}).

\begin{corollary}\label{C2.2} Let $\alpha \in \mathcal{\overline{T}}(X,Y)$. Then the following conditions are equivalent:
\\[8pt]
{\rm($i$)} $\alpha \in {\rm Reg}(\mathcal{\overline{T}}(X,Y))$.
\\[8pt]
{\rm($ii$)} $X\alpha \cap Y\subseteq Y\alpha $.
\\[8pt]
{\rm($iii$)} $X\alpha \cap Y= Y\alpha $.
\end{corollary}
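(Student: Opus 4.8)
The plan is to derive Corollary \ref{C2.2} as the special case $Z=Y$ of Theorem \ref{T2.1}. First I would observe that when $Z=Y$, the defining condition $Y\alpha\subseteq Z$ becomes $Y\alpha\subseteq Y$, so $\mathcal{T}(X,Y,Z)=\mathcal{\overline{T}}(X,Y)$ exactly as recorded in the bulleted list of the introduction, and consequently $\mathrm{Reg}(\mathcal{T}(X,Y,Z))=\mathrm{Reg}(\mathcal{\overline{T}}(X,Y))$.

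Next I would simply substitute $Z=Y$ into the three equivalent conditions of Theorem \ref{T2.1}. Condition $(i)$, $\alpha\in\mathrm{Reg}(\mathcal{T}(X,Y,Z))$, becomes $\alpha\in\mathrm{Reg}(\mathcal{\overline{T}}(X,Y))$, which is condition $(i)$ of the corollary. In conditions $(ii)$ and $(iii)$ the set $Z\alpha$ becomes $Y\alpha$, so $X\alpha\cap Y\subseteq Z\alpha$ turns into $X\alpha\cap Y\subseteq Y\alpha$ and $X\alpha\cap Y=Z\alpha$ turns into $X\alpha\cap Y=Y\alpha$, matching conditions $(ii)$ and $(iii)$ of the corollary verbatim.

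I do not anticipate any genuine obstacle here, since the corollary is a direct specialization; the only point requiring a word of care is confirming that the hypothesis $\varnothing\neq Z\subseteq Y\subseteq X$ of the theorem is consistent with the choice $Z=Y$, which it plainly is. Thus the whole argument reduces to the single line \emph{apply Theorem \ref{T2.1} with $Z=Y$}, and no separate verification is needed because the equivalences are inherited intact under the substitution. This also explains why the paper phrases the corollary as a consequence obtained by ``taking $Z=Y$'' rather than supplying an independent proof.
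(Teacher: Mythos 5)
Your proposal is correct and matches the paper exactly: the paper obtains Corollary \ref{C2.2} precisely by taking $Z=Y$ in Theorem \ref{T2.1}, noting that $\mathcal{T}(X,Y,Y)=\mathcal{\overline{T}}(X,Y)$, so the three conditions specialize verbatim. No further verification is needed.
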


\begin{corollary}\label{C2.3} Let $\alpha \in \mathcal{T}(X,Z)$. Then the following conditions are equivalent:
\\[8pt]
{\rm($i$)} $\alpha \in {\rm Reg}(\mathcal{T}(X,Z))$.
\\[8pt]
{\rm($ii$)} $X\alpha \subseteq Z\alpha $.
\\[8pt]
{\rm($iii$)} $X\alpha = Z\alpha $.
\end{corollary}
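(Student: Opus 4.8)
The plan is to obtain Corollary~\ref{C2.3} as the special case $Y=X$ of Theorem~\ref{T2.1}, so that no genuinely new argument is needed beyond simplifying the set-theoretic conditions.

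First I would record that taking $Y=X$ in the definition $\mathcal{T}(X,Y,Z)=\{\alpha\in\mathcal{T}(X):Y\alpha\subseteq Z\}$ gives $\mathcal{T}(X,X,Z)=\{\alpha\in\mathcal{T}(X):X\alpha\subseteq Z\}=\mathcal{T}(X,Z)$, and that these coincide not merely as sets but as semigroups under composition. Consequently, for $\alpha\in\mathcal{T}(X,Z)$, membership in ${\rm Reg}(\mathcal{T}(X,Z))$ is identical to membership in ${\rm Reg}(\mathcal{T}(X,X,Z))$, so condition~($i$) of the corollary is precisely condition~($i$) of the theorem applied to $\mathcal{T}(X,X,Z)$.

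Next I would substitute $Y=X$ into conditions~($ii$) and~($iii$) of Theorem~\ref{T2.1}. Since $X\alpha\subseteq X$, we have $X\alpha\cap Y=X\alpha\cap X=X\alpha$; hence ($ii$) of the theorem, namely $X\alpha\cap Y\subseteq Z\alpha$, becomes $X\alpha\subseteq Z\alpha$, and ($iii$), namely $X\alpha\cap Y=Z\alpha$, becomes $X\alpha=Z\alpha$. These are exactly conditions~($ii$) and~($iii$) of the corollary. The containment chain used in the theorem's ($ii$)$\Rightarrow$($iii$) step also survives the specialization, reading $Z\alpha\subseteq X\alpha\cap Z\subseteq X\alpha$, which holds trivially. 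Invoking Theorem~\ref{T2.1} then yields all three equivalences at once.

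There is no real obstacle here, since the entire content is inherited from Theorem~\ref{T2.1} and the only thing to verify is the elementary identity $X\alpha\cap X=X\alpha$. The one point worth a moment's care is confirming that regularity is computed inside the same semigroup on both sides, that is, that $\mathcal{T}(X,Z)$ really is the $Y=X$ instance of $\mathcal{T}(X,Y,Z)$ and not some different subsemigroup of $\mathcal{T}(X)$; once that identification is in place the result is immediate.
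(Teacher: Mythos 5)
Your proposal is correct and matches the paper's own derivation exactly: the paper obtains Corollary~\ref{C2.3} by setting $Y=X$ in Theorem~\ref{T2.1}, just as you do, with the conditions simplifying via $X\alpha\cap X=X\alpha$. Your extra care in checking that $\mathcal{T}(X,X,Z)=\mathcal{T}(X,Z)$ as semigroups (so regularity is computed in the same semigroup) is a sound point that the paper leaves implicit.
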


Nenthein, Youngkhong and Kemprasit presented a necessary and sufficient condition under which $\mathcal{T}(X,Z)$ (resp., $\mathcal{ \overline{T}}(X,Y)$) is regular in \cite{N} that is,

\begin{lemma}\label{L2.4}{\rm(\cite[Corollary 2.2]{N})} $\mathcal{T}(X,Z)$ is a regular semigroup if and only if $|Z|=1$ or $X=Z$.
\end{lemma}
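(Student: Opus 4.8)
The plan is to deduce the statement directly from the characterization of regular elements already obtained in Corollary~\ref{C2.3}. By that corollary, an element $\alpha \in \mathcal{T}(X,Z)$ lies in ${\rm Reg}(\mathcal{T}(X,Z))$ precisely when $X\alpha = Z\alpha$; and since $Z\subseteq X$ forces $Z\alpha \subseteq X\alpha$ for every $\alpha$, this is equivalent to the single inclusion $X\alpha \subseteq Z\alpha$. Hence $\mathcal{T}(X,Z)$ is regular if and only if $X\alpha \subseteq Z\alpha$ holds for \emph{every} $\alpha \in \mathcal{T}(X,Z)$, and the whole proof reduces to deciding under which conditions on $X$ and $Z$ this inclusion can fail.

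For the sufficiency direction I would dispose of the two easy cases. If $|Z|=1$, then $\mathcal{T}(X,Z)$ consists of the single constant map onto the unique point of $Z$, so $X\alpha = Z\alpha = Z$ for its only element and regularity is immediate. If $X=Z$, then $\mathcal{T}(X,Z)=\mathcal{T}(X)$, so $X\alpha = Z\alpha$ trivially (and one may also quote the known regularity of the full transformation semigroup). In either case the reduced criterion $X\alpha \subseteq Z\alpha$ holds for all $\alpha$, so $\mathcal{T}(X,Z)$ is regular.

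The necessity direction I would prove by contraposition: assuming $|Z|\geq 2$ \emph{and} $Z\subsetneq X$, I would exhibit a single non-regular element. Using $Z\subsetneq X$, fix a point $p\in X\setminus Z$; using $|Z|\geq 2$, fix distinct $z_1,z_2\in Z$. Define $\alpha\in\mathcal{T}(X,Z)$ by sending $p$ to $z_2$ and every other point of $X$ (in particular all of $Z$) to $z_1$. Then $X\alpha=\{z_1,z_2\}\subseteq Z$, so $\alpha$ genuinely lies in $\mathcal{T}(X,Z)$, while $Z\alpha=\{z_1\}$, so that $z_2\in X\alpha\setminus Z\alpha$ and hence $X\alpha\neq Z\alpha$. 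By Corollary~\ref{C2.3} this $\alpha$ is not regular, so $\mathcal{T}(X,Z)$ is not a regular semigroup. Combining the two directions gives the claimed equivalence.

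There is no serious obstacle here once Corollary~\ref{C2.3} is in hand; the only point requiring care is that the counterexample in the necessity direction must simultaneously use both hypotheses $|Z|\geq 2$ and $Z\subsetneq X$ (the former to separate the images $z_1\neq z_2$, the latter to supply a point $p$ outside $Z$ whose image escapes $Z\alpha$), and one must double-check that the constructed $\alpha$ still satisfies $X\alpha\subseteq Z$ so that it is a legitimate member of $\mathcal{T}(X,Z)$.
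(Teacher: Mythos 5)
Your proof is correct. Note, however, that this paper contains no proof of Lemma~\ref{L2.4} to compare against: the lemma is quoted as a known result from \cite[Corollary 2.2]{N}. What you have produced is a self-contained derivation inside the paper's own framework, and it is legitimately non-circular: you deduce it from Corollary~\ref{C2.3}, which is the specialization $Y=X$ of Theorem~\ref{T2.1}, and the proof of Theorem~\ref{T2.1} nowhere relies on Lemma~\ref{L2.4}. Your argument also runs exactly parallel to how the paper itself argues in the proof of Theorem~\ref{T2.6}: in Case~1 there ($Z\subsetneq Y\subsetneq X$) the authors construct the same kind of two-valued counterexample (all of $Y$ to a point $z\in Z$, all of $X\setminus Y$ to a point $y\neq z$) and invoke Theorem~\ref{T2.1} to conclude non-regularity; your map sending the chosen $p\in X\setminus Z$ to $z_2$ and every other point to $z_1$ is precisely the $Y=X$ analogue of that construction, and your verification that it lies in $\mathcal{T}(X,Z)$ with $X\alpha=\{z_1,z_2\}\neq\{z_1\}=Z\alpha$ is sound (the two sufficiency cases, $|Z|=1$ giving a one-element semigroup and $X=Z$ giving $\mathcal{T}(X)$, are handled the standard way). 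The only cosmetic point is that the paper writes maps on the right, so one would write $p\alpha=z_2$ rather than $\alpha(p)=z_2$; this has no bearing on correctness.
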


\begin{lemma}\label{L2.5}{\rm(\cite[Corollary 2.4]{N})} $\mathcal{ \overline{T}}(X,Y)$ is a regular semigroup if and only if $|Y|=1$ or $X=Y$.
\end{lemma}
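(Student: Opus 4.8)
The plan is to derive this directly from Corollary \ref{C2.2}, which says that an element $\alpha \in \mathcal{\overline{T}}(X,Y)$ is regular precisely when $X\alpha \cap Y = Y\alpha$. Hence the whole semigroup is regular if and only if this equality holds for \emph{every} $\alpha \in \mathcal{\overline{T}}(X,Y)$, and since the inclusion $Y\alpha \subseteq X\alpha \cap Y$ is always valid, the only question is when it can be proper.

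For the sufficiency I would treat the two cases separately. If $X=Y$, then $X\alpha \cap Y = X\alpha = Y\alpha$ for every $\alpha$, so the regularity criterion holds automatically. If $|Y|=1$, say $Y=\{y\}$, then the defining condition $Y\alpha \subseteq Y$ forces $y\alpha = y$; hence $y \in X\alpha \cap Y$ and $Y\alpha = \{y\}$, giving $X\alpha \cap Y = \{y\} = Y\alpha$. In either case every element satisfies the criterion of Corollary \ref{C2.2}, so $\mathcal{\overline{T}}(X,Y)$ is regular.

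For the necessity I would argue by contraposition: assuming $|Y|\geq 2$ and $Y\subsetneq X$, I will exhibit a single non-regular element. Pick distinct $y_1,y_2\in Y$ and a point $a\in X\setminus Y$, and define $\alpha$ by sending every element of $X\setminus\{a\}$ to $y_1$ and sending $a$ to $y_2$. Then $Y\alpha = \{y_1\}\subseteq Y$ (note $Y\subseteq X\setminus\{a\}$ since $a\notin Y$), so $\alpha \in \mathcal{\overline{T}}(X,Y)$; but $y_2 = a\alpha \in X\alpha \cap Y$ while $y_2 \notin Y\alpha$, so $X\alpha \cap Y \neq Y\alpha$ and $\alpha$ is not regular by Corollary \ref{C2.2}. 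Thus failure of both conditions produces a non-regular semigroup, which completes the equivalence.

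The argument is essentially routine once Corollary \ref{C2.2} is in hand; the only point requiring a little care is the construction in the necessity direction, where one must simultaneously keep $\alpha$ inside $\mathcal{\overline{T}}(X,Y)$ (i.e.\ $Y\alpha \subseteq Y$) and force the image to pick up a point of $Y$ lying outside $Y\alpha$. Using a point $a\in X\setminus Y$ as the witness cleanly separates these two requirements, and the same template works verbatim for arbitrary, possibly infinite, $X$.
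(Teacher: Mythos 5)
Your proof is correct. Note, however, that the paper itself does not prove this lemma at all: it is quoted directly from Nenthein, Youngkhong and Kemprasit \cite[Corollary 2.4]{N} and treated as an imported black box, which is then invoked in the proof of Theorem \ref{T2.6} (in the sufficiency part for $|Y|=1$ and in Case 2 of the necessity part). What you have done is re-derive that cited result internally from Corollary \ref{C2.2}, itself the $Z=Y$ specialization of the paper's Theorem \ref{T2.1}. Your sufficiency cases are routine and sound: $X=Y$ gives $X\alpha\cap Y=X\alpha=Y\alpha$ outright, and $|Y|=1$ forces $y\alpha=y$, whence $X\alpha\cap Y=\{y\}=Y\alpha$. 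Your necessity witness --- sending all of $X\setminus\{a\}$ to $y_1$ and $a\mapsto y_2$, with $y_1,y_2\in Y$ distinct and $a\in X\setminus Y$ --- does lie in $\mathcal{\overline{T}}(X,Y)$ (since $Y\subseteq X\setminus\{a\}$) and violates $X\alpha\cap Y\subseteq Y\alpha$, so Corollary \ref{C2.2} kills regularity. It is worth observing that your construction is essentially the $Z=Y$ analogue of the map the paper builds in Case 1 of the proof of Theorem \ref{T2.6} (there: $Y\mapsto z$, $X\setminus Y\mapsto y$), so your argument uses exactly the paper's toolkit; the benefit of your route is that it makes the paper self-contained on this point, at the cost of reproving something the authors preferred to delegate to the literature.
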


Next, a necessary and sufficient condition for $\mathcal{T}(X,Y,Z)$ to be a regular semigroup can be given as follows:

\begin{theorem}\label{T2.6}$\mathcal{T}(X,Y,Z)$ is a regular semigroup if and only if one of the following statements holds:
\\[8pt]
{\rm($i$)} $|Y|=1.$
\\[8pt]
{\rm($ii$)} $X=Y$ and $|Z|=1.$
\\[8pt]
{\rm($iii$)} $Z=Y=X.$
\end{theorem}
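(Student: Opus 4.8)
The plan is to reduce everything to the regular-element criterion of Theorem~\ref{T2.1}: the semigroup $\mathcal{T}(X,Y,Z)$ is regular precisely when every $\alpha$ satisfies $X\alpha \cap Y = Z\alpha$. Since the inclusion $Z\alpha \subseteq X\alpha \cap Y$ always holds (as noted in the step $(ii)\Rightarrow(iii)$ of Theorem~\ref{T2.1}), the only thing that can fail is the reverse inclusion $X\alpha \cap Y \subseteq Z\alpha$. So regularity amounts to the assertion that no $\alpha \in \mathcal{T}(X,Y,Z)$ carries a point of $X\alpha \cap Y$ outside $Z\alpha$.

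First I would split the chain $Z \subseteq Y \subseteq X$ into the four mutually exclusive and exhaustive cases according to which inclusions are strict: (a) $Z=Y=X$; (b) $Z=Y\subsetneq X$; (c) $Z\subsetneq Y=X$; (d) $Z\subsetneq Y\subsetneq X$. In case (a) the semigroup is $\mathcal{T}(X)$, which is regular, and condition $(iii)$ holds; in case (b) it is $\mathcal{\overline{T}}(X,Y)$, so by Lemma~\ref{L2.5} it is regular iff $|Y|=1$ (since $X=Y$ is excluded here), which is exactly condition $(i)$; in case (c) it is $\mathcal{T}(X,Z)$, so by Lemma~\ref{L2.4} it is regular iff $|Z|=1$ (since $X=Z$ is excluded here), which, together with $X=Y$, is exactly condition $(ii)$. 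In each of these three cases I would also confirm that the other two of $(i),(ii),(iii)$ cannot hold, so that ``regular'' matches the disjunction $(i)\vee(ii)\vee(iii)$ on the nose.

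The hard part, and the only genuinely new case, is (d), where $Z\subsetneq Y\subsetneq X$. Here none of $(i),(ii),(iii)$ can hold ($Z\subsetneq Y$ forces $|Y|\geq 2$, and $Y\neq X$), so I must show the semigroup is never regular. The key step is an explicit construction: choose $y_0\in Y\setminus Z$, a point $w\in X\setminus Y$, and $z_0\in Z$, and define $\alpha$ by $w\alpha=y_0$ and $x\alpha=z_0$ for all $x\neq w$. Then $Y\alpha=\{z_0\}\subseteq Z$ (since $w\notin Y$), so $\alpha\in\mathcal{T}(X,Y,Z)$; moreover $y_0\in X\alpha\cap Y$ while $Z\alpha=\{z_0\}$ does not contain $y_0$ (as $y_0\notin Z$ but $z_0\in Z$, and $w\notin Z$). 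Hence $X\alpha\cap Y\neq Z\alpha$, and $\alpha$ is non-regular by Theorem~\ref{T2.1}. The main obstacle is really just the bookkeeping around this construction, namely verifying that $\alpha$ genuinely lands in $\mathcal{T}(X,Y,Z)$ and that $y_0$ certifies the failure of the criterion, together with confirming in each of the four cases that the computed regularity condition coincides exactly with $(i)\vee(ii)\vee(iii)$.
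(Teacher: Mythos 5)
Your proposal is correct and takes essentially the same approach as the paper: both rest on the criterion of Theorem~\ref{T2.1}, reduce the cases $Z=Y$ and $Y=X$ to Lemmas~\ref{L2.5} and~\ref{L2.4} respectively, and settle the remaining case $Z\subsetneq Y\subsetneq X$ by exhibiting an explicit non-regular element (the paper's witness sends all of $X\setminus Y$ to some $y\in Y$ with $y\neq z$, yours sends a single point of $X\setminus Y$ to some $y_0\in Y\setminus Z$ --- an immaterial difference). One inessential slip: your side remark that in each of cases (a)--(c) ``the other two conditions cannot hold'' fails when $|X|=1$ (then all three conditions hold simultaneously), but exclusivity is never needed, since your case analysis already establishes the equivalence of regularity with the disjunction $(i)\vee(ii)\vee(iii)$.
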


\begin{proof} For $|Y|=1$. It is note that $Z$ be a nonempty subset of $Y$, then $Y=Z$ and so $\mathcal{T}(X,Y,Z)=\mathcal{\overline{T}}(X,Y)$. According to Lemma \ref{L2.5}, we have $\mathcal{T}(X,Y,Z)$ is regular. For $X=Y$ and $|Z|=1$. It is easy to see that $\mathcal{T}(X,Y,Z)=\mathcal{T}(X,Z)$ and so from Lemma \ref{L2.4} it follows that $\mathcal{T}(X,Y,Z)$ is regular. For $Z=Y=X$, we have $\mathcal{T}(X,Y,Z)=\mathcal{T}(X)$ which is regular.

Conversely, suppose that $\mathcal{T}(X,Y,Z)$ is a regular semigroup, and let ($i$), ($ii$) and ($iii$) be not established. Note that $X$, $Y$ and $Z$ are all nonempty sets with $Z\subseteq Y\subseteq X$. To do this, we distinguish three cases:

$\mathbf{Case 1.}$ $Z\subsetneq Y\subsetneq X$. Let $z$ be an element of $Z$, and choose $y\in Y$ such that $y\neq z$. Since $X\setminus Y\neq \varnothing$, we define a mapping $\alpha :X\rightarrow X$ by
$$x\alpha= \left\{ \begin{array}{rl}
z         &\mbox{ if $x\in Y$.  }\\
y         &\mbox{ if $x\in X\setminus Y$.} \end{array}  \right. $$
It is easy to verify that $\alpha \in \mathcal{T}(X,Y,Z)$. However, $X\alpha \cap Y=\{z,y\}\supsetneq \{z\}=Z\alpha $. By Theorem \ref{T2.1}, we immediately deduce that $\alpha $ is not a regular element of $\mathcal{T}(X,Y,Z)$, which contradicts the fact that $\mathcal{T}(X,Y,Z)$ is regular.

$\mathbf{Case 2.}$ $|Z|>1$ and $Z=Y\subsetneq X$. Then $\mathcal{T}(X,Y,Z)=\mathcal{ \overline{T}}(X,Y)$ with $|Y|\neq 1$ and $X\neq Y$. Also, we have $\mathcal{T}(X,Y,Z)$ is not regular by Lemma \ref{L2.5}. This is a contradiction.

$\mathbf{Case 3.}$ $|Z|>1$ and $Z\subsetneq Y= X$. Then $\mathcal{T}(X,Y,Z)=\mathcal{T}(X,Z)$ with $|Z|\neq 1$ and $X\neq Z$. Similar to the above, we have $\mathcal{T}(X,Y,Z)$ is not regular by Lemma \ref{L2.4}. This is a contradiction.
\end{proof}

\section{Abundance}
In this section, we investigate the abundance of the semigroup $\mathcal{T}(X,Y,Z)$ for the case $Z\subsetneq Y\subsetneq X$. The following two lemmas give characterizations of $\mathcal{L}^{\ast}$ and $\mathcal{R}^{\ast}$ that can be found, for instance, in \cite{ Fountain}.

\begin{lemma}{\rm (\cite[Lemma 1.1]{ Fountain})} \label{L3.1}  Let $S$ be a semigroup and $a, b \in S$. Then the following statements are equivalent:
\\[8pt]
{\rm (1)} $(a,b)\in \mathcal{L}^{\ast}$.
\\[8pt]
{\rm (2)} For all $x, y\in S^1$, $ax = ay$ if and only if $bx = by$.
 \end{lemma}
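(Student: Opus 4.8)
The statement is an equivalence, so the plan is to prove both implications, reading $(1)$ as the definition of $\mathcal{L}^{\ast}$, namely that $(a,b)\in\mathcal{L}$ in some oversemigroup of $S$. The implication $(1)\Rightarrow(2)$ is the routine half. Since $(a,b)\in\mathcal{L}^{\ast}$, there is an oversemigroup $T$ of $S$ with $T^1a=T^1b$, so I can pick $u,v\in T^1$ satisfying $b=ua$ and $a=vb$. For $x,y\in S^1\subseteq T^1$ I would then compute directly: if $ax=ay$ then $bx=u(ax)=u(ay)=by$, and symmetrically $bx=by$ forces $ax=v(bx)=v(by)=ay$, using only associativity in $T^1$. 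The sole point needing attention is that $S^1$ embeds compatibly in $T^1$ and that products of elements of $S$ agree whether computed in $S$ or in $T$; since $S$ is a subsemigroup this is immediate once one checks the behaviour of the adjoined identity, which I would dispatch at the outset.

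For the substantive direction $(2)\Rightarrow(1)$ I would exhibit an explicit oversemigroup. Consider the left-translation representation $a\mapsto\lambda_a$, where $\lambda_a$ is the transformation of $S^1$ given by $x\lambda_a=ax$. Writing functions on the right as in this paper, a short check gives $\lambda_{ab}=\lambda_b\lambda_a$, so $a\mapsto\lambda_a$ is an anti-homomorphism into $\mathcal{T}(S^1)$, hence a genuine homomorphism into the opposite semigroup $\mathcal{T}(S^1)^{\mathrm{op}}$; it is injective because setting $x=1$ recovers $a$ from $\lambda_a$. Identifying $S$ with its image, $\mathcal{T}(S^1)^{\mathrm{op}}$ is an oversemigroup of $S$.

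The heart of the matter is then the observation that the kernel of $\lambda_a$, viewed as a transformation of $S^1$, is exactly $\{(x,y):ax=ay\}$, so condition $(2)$ states precisely that $\ker\lambda_a=\ker\lambda_b$. Invoking the standard description of Green's relations on a full transformation monoid, equality of kernels characterises $\mathcal{R}$ in $\mathcal{T}(S^1)$; and since passing to the opposite semigroup interchanges $\mathcal{R}$ and $\mathcal{L}$, this is the same as $\lambda_a\,\mathcal{L}\,\lambda_b$ in $\mathcal{T}(S^1)^{\mathrm{op}}$. Thus $a$ and $b$ are $\mathcal{L}$-related in an oversemigroup, i.e. $(a,b)\in\mathcal{L}^{\ast}$.

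I expect the main obstacle to be the bookkeeping in this backward direction rather than any deep difficulty: one must route the hypothesis about \emph{left} multiplication ($ax=ay$) through the \emph{kernel} of a transformation, and therefore pass to the opposite semigroup so that equal kernels manifest as $\mathcal{L}$ and not $\mathcal{R}$. This is exactly the place where a sign error in the conventions would silently prove the wrong statement, so I would state the kernel/image description of Green's relations on $\mathcal{T}(S^1)$ carefully with the functions-on-the-right convention before using it. A less constructive alternative is to take condition $(2)$ as defining a relation and argue abstractly that it is realised as $\mathcal{L}$ in some oversemigroup, but the transformation model above has the advantage of producing the oversemigroup explicitly and making the correspondence transparent.
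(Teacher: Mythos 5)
Your proposal is correct, but there is nothing in the paper to compare it against: the paper does not prove this lemma at all, it simply quotes it from Fountain (\cite[Lemma 1.1]{Fountain}) and uses it as a tool in Section 3. So your argument should be judged as a self-contained reconstruction, and as such it goes through. The direction $(1)\Rightarrow(2)$ is exactly the routine computation you describe: from $b=ua$, $a=vb$ with $u,v\in T^1$, associativity transfers $ax=ay$ to $bx=by$ and back. The direction $(2)\Rightarrow(1)$ via the left-translation map $\lambda_a$ is the standard classical construction, and your bookkeeping is right: with functions on the right, $x\lambda_a=ax$ gives $\lambda_{ab}=\lambda_b\lambda_a$, so $\lambda$ is a homomorphism into $\mathcal{T}(S^1)^{\mathrm{op}}$, injective because $1\lambda_a=a$; condition $(2)$ says precisely $\ker\lambda_a=\ker\lambda_b$; equal kernels characterize $\mathcal{R}$ in $\mathcal{T}(S^1)$ (this holds for arbitrary, not just finite, base sets, which matters here since $S$ may be infinite); and passing to the opposite semigroup turns $\mathcal{R}$ into $\mathcal{L}$, producing an oversemigroup in which $a$ and $b$ are $\mathcal{L}$-related.

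Two small points you flagged should indeed be made explicit in a final write-up, though neither is a gap. First, in $(1)\Rightarrow(2)$ the containment $S^1\subseteq T^1$ is a mild abuse when identities are adjoined separately: the case $x=1$ (or $y=1$) must simply be read as the statement $a=ay$, etc., and the same computation handles it. Second, in $(2)\Rightarrow(1)$ the definition of $\mathcal{L}^{\ast}$ asks for an oversemigroup literally containing $S$, whereas you produce an embedding $\lambda:S\to\mathcal{T}(S^1)^{\mathrm{op}}$; one finishes by the usual transport-of-structure step, replacing $\lambda(S)$ by $S$ inside $\mathcal{T}(S^1)^{\mathrm{op}}$. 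With those two sentences added, the proof is complete.
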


Dually, we have:

\begin{lemma} \label{L3.2}  Let $S$ be a semigroup and $a, b \in S$. Then the following statements are equivalent:
\\[8pt]
{\rm ($i$)} $(a,b)\in \mathcal{R}^{\ast}$.
\\[8pt]
{\rm ($ii$)} For all $x, y\in S^1$, $xa = ya$ if and only if $xb = yb$.
 \end{lemma}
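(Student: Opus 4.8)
The plan is to deduce Lemma~\ref{L3.2} from Lemma~\ref{L3.1} by the standard left--right duality of semigroup theory, rather than reconstructing the argument from scratch. Concretely, I would introduce the opposite semigroup $S^{\mathrm{op}}$, which has the same underlying set as $S$ but multiplication $a\ast b=ba$, and transport the statement of Lemma~\ref{L3.1} across the assignment $S\mapsto S^{\mathrm{op}}$.

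First I would record the bookkeeping that makes the transfer legitimate. Taking opposites is an involutive operation that sends a semigroup to a semigroup and an oversemigroup of $S$ to an oversemigroup of $S^{\mathrm{op}}$; moreover it interchanges the two one-sided Green relations, so that $(a,b)\in\mathcal{R}$ in a semigroup $T$ if and only if $(a,b)\in\mathcal{L}$ in $T^{\mathrm{op}}$ (indeed $(T^{\mathrm{op}})^1a=aT^1$ and $(T^{\mathrm{op}})^1b=bT^1$). Applying this with $T$ ranging over the oversemigroups of $S$, and using the definition of the starred relations (a pair lies in $\mathcal{R}^{\ast}$ precisely when it lies in $\mathcal{R}$ in some oversemigroup), I obtain the key identification: $(a,b)\in\mathcal{R}^{\ast}$ in $S$ if and only if $(a,b)\in\mathcal{L}^{\ast}$ in $S^{\mathrm{op}}$. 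I would also note that $(S^{\mathrm{op}})^1=(S^1)^{\mathrm{op}}$, so the adjoined identity behaves correctly under the duality.

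Next I would simply invoke Lemma~\ref{L3.1} in the semigroup $S^{\mathrm{op}}$: the relation $(a,b)\in\mathcal{L}^{\ast}$ in $S^{\mathrm{op}}$ holds exactly when, for all $x,y\in(S^{\mathrm{op}})^1$, $a\ast x=a\ast y$ if and only if $b\ast x=b\ast y$. Rewriting each product $u\ast v$ as $vu$ in $S$, the condition $a\ast x=a\ast y$ becomes $xa=ya$ and $b\ast x=b\ast y$ becomes $xb=yb$, with $x,y$ now ranging over $S^1$. Combining this with the identification of the previous paragraph yields exactly the equivalence $(i)\Leftrightarrow(ii)$ claimed in Lemma~\ref{L3.2}.

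The only genuinely delicate point is the middle step: verifying that passing to the opposite semigroup really does convert $\mathcal{R}^{\ast}$ into $\mathcal{L}^{\ast}$. This rests on the facts that ``being an oversemigroup'' is preserved and ``$\mathcal{R}$ versus $\mathcal{L}$'' is interchanged under $(-)^{\mathrm{op}}$, together with their compatibility (the opposite of an embedding of $S$ into $T$ is an embedding of $S^{\mathrm{op}}$ into $T^{\mathrm{op}}$). Once this is checked, the remainder is a purely formal rewriting of products, so no real computation survives. If one wished to avoid the duality machinery, the alternative would be to mirror the proof of Lemma~\ref{L3.1} line by line: the direction $(i)\Rightarrow(ii)$ is immediate from relations $a=bu$, $b=av$ in a witnessing oversemigroup, whereas $(ii)\Rightarrow(i)$ requires the construction of an oversemigroup in which $a\,\mathcal{R}\,b$, and that construction is where the substantive work of a direct argument would lie.
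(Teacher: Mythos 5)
Your proposal is correct and matches the paper's approach: the paper offers no argument beyond the word ``Dually,'' and your opposite-semigroup construction is precisely the rigorous content of that duality (the identification $(S^{\mathrm{op}})^1=(S^1)^{\mathrm{op}}$, the interchange of $\mathcal{R}$ and $\mathcal{L}$ across $(-)^{\mathrm{op}}$, and hence of $\mathcal{R}^{\ast}$ and $\mathcal{L}^{\ast}$ via oversemigroups, are all verified correctly). Nothing further is needed.
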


To facilitate the description of the following lemma, we introduce a binary relation $\Lambda$ on $\mathcal{T}(X,Y,Z)$ as follows: For each $\alpha, \beta \in \mathcal{T}(X,Y,Z)$, $(\alpha, \beta)\in \Lambda$ if and only if one of the following statements holds:
\\[8pt]
{\rm ($i$)} $(X\setminus Y)\alpha \cap (Y\setminus Z)= \varnothing$ and $(X\setminus Y)\beta \cap (Y\setminus Z)= \varnothing$.
\\[8pt]
{\rm ($ii$)} $(X\setminus Y)\alpha \cap (Y\setminus Z)\neq \varnothing$ and $(X\setminus Y)\beta \cap (Y\setminus Z)\neq \varnothing$.\\[8pt]
Clearly, $\Lambda$ is an equivalence relation on $\mathcal{T}(X,Y,Z)$.

\begin{lemma}\label{L3.3}  Let $Z\subsetneq Y\subsetneq X$ and $\alpha, \beta \in \mathcal{T}(X,Y,Z)$. Then the following statements hold:
\\[8pt]
{\rm ($i$)} for $|Z|=1$, $(\alpha, \beta)\in \mathcal{L}^{\ast}$ if and only if $(\alpha, \beta)\in \Lambda$ and $X\alpha \cap (X\setminus Y)=X\beta \cap (X\setminus Y)$.
\\[8pt]
{\rm ($ii$)} for $|Z|\geq 2$, $(\alpha, \beta)\in \mathcal{L}^{\ast}$ if and only if $X\alpha =X\beta $.
 \end{lemma}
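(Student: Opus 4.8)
The plan is to apply the characterization of $\mathcal{L}^{\ast}$ in Lemma \ref{L3.1}. Write $S=\mathcal{T}(X,Y,Z)$ and set $P=X\alpha$, $Q=X\beta$. Since functions are composed on the right, for $x,y\in S^{1}$ the equality $\alpha x=\alpha y$ holds exactly when $x$ and $y$ agree on the range $X\alpha=P$; writing $x\equiv_{P}y$ for $x|_{P}=y|_{P}$, Lemma \ref{L3.1} thus says that $(\alpha,\beta)\in\mathcal{L}^{\ast}$ precisely when $\equiv_{P}$ and $\equiv_{Q}$ coincide as equivalence relations on $S^{1}$. So everything reduces to comparing ``agreement on $P$'' with ``agreement on $Q$''. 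The structural facts I would record first are that every element of $S$ sends $Y$ into $Z$, so on $Y$ the separating power of $S$ is limited, while on $X\setminus Y$ elements of $S$ are completely unconstrained and can separate points freely; I would also use the decomposition $P=(P\cap Y)\cup(P\cap(X\setminus Y))$.

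For part ($ii$), where $|Z|\geq 2$, the ``if'' direction is immediate since $P=Q$ makes the two agreement conditions literally identical. For ``only if'' I would argue contrapositively: if $P\neq Q$, pick $p\in P\setminus Q$ (or symmetrically $p\in Q\setminus P$) and build $x,y\in S$ that agree everywhere except at $p$, with $x(p)\neq y(p)$. When $p\in X\setminus Y$ the two values are unrestricted; when $p\in Y$ I use $|Z|\geq 2$ to choose two distinct images in $Z$, keeping both maps in $S$ (map the rest of $Y$ to a fixed point of $Z$). Such $x,y$ agree on $Q$ but not on $P$, so $\equiv_{P}\neq\equiv_{Q}$ and $(\alpha,\beta)\notin\mathcal{L}^{\ast}$. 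This gives $(\alpha,\beta)\in\mathcal{L}^{\ast}\iff P=Q$.

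For part ($i$), where $|Z|=1$, say $Z=\{z_{0}\}$, every $x\in S$ is constant $z_{0}$ on all of $Y$, whence $Y\alpha=\{z_{0}\}$ and $z_{0}\in P$ always. Consequently, for $x,y\in S$ agreement on $P$ reduces to agreement on $P\cap(X\setminus Y)$, since $x,y$ already coincide (both equal $z_{0}$) on $P\cap Y$. Running the separation argument on $X\setminus Y$ as above shows that matching these $S$-only tests forces $X\alpha\cap(X\setminus Y)=X\beta\cap(X\setminus Y)$. The remaining, and most delicate, point is that ordinary elements of $S$ cannot detect whether the range meets $Y\setminus Z$, because they collapse $Y$ to $z_{0}$; this information is visible only through the adjoined identity $1\in S^{1}$. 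I would therefore test with $x=1$: then $1\equiv_{P}y$ means $y$ fixes $P$ pointwise, which for $y\in S$ is possible only if $P\cap Y\subseteq\{z_{0}\}$, equivalently $(X\setminus Y)\alpha\cap(Y\setminus Z)=\varnothing$. Comparing this criterion for $P$ and for $Q$ shows the identity-tests match if and only if $\alpha$ and $\beta$ lie on the same side of this dichotomy, that is $(\alpha,\beta)\in\Lambda$. Combining the $S$-tests and the identity-tests yields exactly $(\alpha,\beta)\in\mathcal{L}^{\ast}\iff\bigl[(\alpha,\beta)\in\Lambda \text{ and } X\alpha\cap(X\setminus Y)=X\beta\cap(X\setminus Y)\bigr]$.

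I expect the main obstacle to be precisely the treatment of the adjoined identity in the case $|Z|=1$: one must recognize that the elements of $S$ alone are blind to the portion of the range lying in $Y\setminus Z$, and that the relation $\Lambda$ is exactly the invariant extracted by testing against $1\in S^{1}$. By contrast, the routine parts are the explicit constructions of the separating maps and the verification that they belong to $S$.
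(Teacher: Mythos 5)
Your proposal is correct and follows essentially the same route as the paper's proof: both reduce $\mathcal{L}^{\ast}$ via Lemma \ref{L3.1} to agreement of test maps on the ranges $X\alpha$ and $X\beta$, both exploit that for $|Z|=1$ elements of $\mathcal{T}(X,Y,Z)$ are constant on $Y$ (so genuine elements of $S$ only test $X\alpha\cap(X\setminus Y)$, while the adjoined identity detects whether the range meets $Y\setminus Z$, which is exactly the $\Lambda$-dichotomy), and both use the same style of separating-map constructions for the necessity directions. The only cosmetic difference is that you phrase the ``if'' directions through the coincidence of the agreement relations $\equiv_{P}$ and $\equiv_{Q}$, where the paper instead invokes $\mathcal{L}$-relatedness in the oversemigroup $\mathcal{T}(X)$ when the ranges coincide.
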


 \begin{proof} ($i$) Suppose that  $(\alpha, \beta)\in \Lambda$ and $X\alpha \cap (X\setminus Y)=X\beta \cap (X\setminus Y)$. By $|Z|=1$, we say that $Z=\{z_0\}$.  From $(\alpha, \beta)\in \Lambda$, we distinguish two cases:

 {\bf Case 1.} $(X\setminus Y)\alpha \cap (Y\setminus Z)= \varnothing$ and $(X\setminus Y)\beta \cap (Y\setminus Z)= \varnothing$. Clearly, 
\begin{align*}
X\alpha=&Y\alpha\cup (X\setminus Y)\alpha\\
       =&\{z_0\}\cup  \{(X\setminus Y)\alpha \cap [Z\cup (X\setminus Y)]\}    \\
       =&\{z_0\}\cup  \{[(X\setminus Y)\alpha\cap Z]\cup [(X\setminus Y)\alpha \cap (X\setminus Y)]\}\\
                    =&\{z_0\}\cup [(X\setminus Y)\alpha \cap (X\setminus Y)]\\
                    =&\{z_0\}\cup[(X\setminus Y)\alpha \cap (X\setminus Y)]\cup [Y\alpha \cap (X\setminus Y)]~(\mbox{By }Y\alpha \cap (X\setminus Y) \subseteq\\
                     &Z \cap (X\setminus Y)=\varnothing)\\
                    =&\{z_0\}\cup \{[(X\setminus Y)\alpha \cup Y\alpha]  \cap (X\setminus Y)\}\\
                    =&\{z_0\}\cup [X\alpha \cap (X\setminus Y)].
\end{align*}
Similarly, we have $X\beta =\{z_0\}\cup [X\beta \cap (X\setminus Y)]$. Since $X\alpha \cap (X\setminus Y)=X\beta \cap (X\setminus Y)$, we have $X\alpha =X\beta $. This implies that $\alpha$ and $\beta $ are $\mathcal{L}$-related in the full transformation semigroup $\mathcal{T}(X)$ (see \cite[page 63]{H1}). Hence $(\alpha, \beta)\in \mathcal{L}^{\ast}$.
 
 {\bf Case 2.} $(X\setminus Y)\alpha \cap (Y\setminus Z)\neq \varnothing$ and $(X\setminus Y)\beta \cap (Y\setminus Z)\neq \varnothing$. For each $\eta ,\theta \in \mathcal{T}^1(X,Y,Z)$, we consider the following three subcases:

 Case 2.1. $\eta=1$ and $\theta=1$. Clearly, $(\alpha, \beta)\in \mathcal{L}^{\ast}$.

 Case 2.2. $\eta=1$ and $\theta \neq 1$. Then $\theta \in \mathcal{T}(X,Y,Z)$ and so $Y\theta \subseteq Z=\{z_0\}$. Let $\gamma \eta=\gamma \theta$ ($\gamma \in \{\alpha,\beta\}$). Then $\gamma =\gamma \theta$ and so $x\theta =x$ for all $x\in X\gamma$. This means that $(X\setminus Y)\gamma \cap (Y\setminus Z)=\varnothing$ (If not, there exist $b_{\gamma}\in X\setminus Y$ and $y_{\gamma}\in Y\setminus Z$ such that $y_{\gamma}=b_{\gamma}\gamma \in X\gamma$. Then $y_{\gamma}=y_{\gamma}\theta\in Z$, this contradicts the condition that $y_{\gamma}\in Y\setminus Z$). This is a contradiction.

 Case 2.3. $\eta\neq 1$ and $\theta \neq 1$. That is, $\eta, \theta \in \mathcal{T}(X,Y,Z)$. Then $Y\eta =\{z_0\}=Y\theta$ and so $\eta |_Y=\theta |_Y$. Therefore,
\begin{align*}
\alpha \eta =\alpha\theta &\Leftrightarrow \eta |_{X\alpha }=\theta |_{X\alpha }   \nonumber\\
&\Leftrightarrow \eta |_{X\alpha \cap Y}=\theta |_{X\alpha \cap Y}\mbox{ and }\eta |_{X\alpha \cap (X\setminus Y)}=\theta |_{X\alpha \cap (X\setminus Y)} \nonumber\\
&\Leftrightarrow \eta |_{X\beta \cap Y}=\theta |_{X\beta \cap Y}\mbox{ and }\eta |_{X\beta \cap (X\setminus Y)}=\theta |_{X\beta \cap (X\setminus Y)}\\
&\Leftrightarrow \eta |_{X\beta }=\theta |_{X\beta } \nonumber\\
&\Leftrightarrow \beta\eta =\beta\theta. \label{eq:myalign}
\end{align*}
By Lemma \ref{L3.1} we conclude that $(\alpha, \beta)\in \mathcal{L}^{\ast}$.

Conversely, suppose that $(\alpha, \beta)\in \mathcal{L}^{\ast}$  such that $(\alpha,\beta)\notin \Lambda$ or $X\alpha \cap (X\setminus Y)\neq X\beta \cap (X\setminus Y)$. We distinguish two cases: 

 {\bf Case 1.} $(\alpha,\beta)\notin \Lambda$. Then we have ($(X\setminus Y)\alpha \cap (Y\setminus Z)=\varnothing$ and $(X\setminus Y)\beta \cap (Y\setminus Z)\neq \varnothing$) or ($(X\setminus Y)\alpha \cap (Y\setminus Z)\neq \varnothing$ and $(X\setminus Y)\beta \cap (Y\setminus Z)=\varnothing$). By symmetry, let $(X\setminus Y)\alpha \cap (Y\setminus Z)=\varnothing$ and $(X\setminus Y)\beta \cap (Y\setminus Z)\neq \varnothing$. Define two mappings $\eta :X\rightarrow X$ and $\theta :X\rightarrow X$ by $\eta =1$ and
$$x\theta = \left\{ \begin{array}{rl}
x         &\mbox{ if $x\in X\alpha$ }\\
z_0         &\mbox{ if $x\notin X\alpha$.} \end{array}  \right.$$
Clearly, $\theta\in \mathcal{T}(X,Y,Z)$ and $\alpha\eta =\alpha\theta$. Howerver, $\beta\eta \neq\beta\theta$. This contradicts the fact that $(\alpha, \beta)\in \mathcal{L}^{\ast}$.

{\bf Case 2.} $X\alpha \cap (X\setminus Y)\neq X\beta \cap (X\setminus Y)$. Then exists $a\in X\beta \cap (X\setminus Y)$ such that $a\notin X\alpha \cap (X\setminus Y)$ and so $a_0\beta =a$ for some $a_0\in X$ and $x\alpha \neq a$ for all $x\in X$. In fact, $a_0\in X\setminus Y$ (If not, $a=a_0\beta \in Z$, this contradicts the fact that $a\in X\setminus Y$). We consider two cases. If $|X\setminus Y|=1$. It is clear that $X\setminus Y=\{a\}$. Define two mappings $\eta :X\rightarrow X$ and $\theta :X\rightarrow X$ by $X\eta =z_0$ and
$$x\theta = \left\{ \begin{array}{rl}
z_0         &\mbox{ if $x\in Y$ }\\
a_0         &\mbox{ if $x\in X\setminus Y$.} \end{array}  \right.$$
If $|X\setminus Y|\geq 2$. Define two mappings $\eta :X\rightarrow X$ and $\theta :X\rightarrow X$ by
$$x\eta = \left\{ \begin{array}{rl}
z_0         &\mbox{ if $x\in Y\cup \{a\}$ }\\
a_0         &\mbox{ if $x\in X\setminus (Y\cup \{a\})$} \end{array}  \right. \mbox { and~~~~~~}x\theta = \left\{ \begin{array}{rl}
z_0         &\mbox{ if $x\in Y$ }\\
a_0         &\mbox{ if $x\in X\setminus Y$.} \end{array}  \right. $$
For both cases, we have $\eta ,\theta \in \mathcal{T}(X,Y,Z)$ and $\alpha \eta =\alpha \theta$. However, $$a_0\beta \eta =a\eta =z_0\neq a_0=a\theta =a_0\beta \theta$$
 and so $\beta \eta\neq \beta\theta$. This contradicts the fact that $(\alpha, \beta)\in \mathcal{L}^{\ast}$. 
 
Hence, $(\alpha,\beta)\in \Lambda$ and $X\alpha \cap (X\setminus Y)= X\beta \cap (X\setminus Y)$.

($ii$) Let $X\alpha =X\beta$. This implies that $\alpha ,\beta $ are $\mathcal{L}$-related in the full transformation semigroup $\mathcal{T}(X)$. Hence $(\alpha, \beta)\in \mathcal{L}^{\ast}$.

Conversely, suppose that $(\alpha, \beta)\in \mathcal{L}^{\ast}$ and $X\alpha \neq X\beta$. Then exists $a\in X\beta $ such that $a \notin X\alpha $ and so $a_0\beta =a$ for some $a_0\in X$ and $x\alpha \neq a$ for all $x\in X$.  Note that $|Z|\geq 2 $ and $|X|\geq 4$. Then we can take distinct $z_1, z_2\in Z$, and choose nonempty subsets $X_1$, $X_2$ of $X$ with $|X_i|\geq 2$ ($i=1,2$) such that $X$  is a disjoint union of $X_1$ and $X_2$. Define two mappings $\eta :X\rightarrow X$ and $\theta :X\rightarrow X$ by
$$x\eta = \left\{ \begin{array}{rl}
z_1         &\mbox{ if $x\in X_1\cup \{a\}$ }\\
z_2         &\mbox{ if $x\in X_2\setminus \{a\}$} \end{array}  \right. \mbox { and~~~~~~}x\theta = \left\{ \begin{array}{rl}
z_1         &\mbox{ if $x\in X_1\setminus \{a\}$ }\\
z_2         &\mbox{ if $x\in X_2\cup \{a\}$.} \end{array}  \right. $$
Clearly, $\eta ,\theta \in \mathcal{T}(X,Y,Z)$ and $\alpha \eta =\alpha \theta$. However, $$a_0\beta \eta =a\eta =z_1\neq z_2=a\theta =a_0\beta \theta$$ and so $\beta \eta\neq \beta\theta$. This contradicts the fact that $(\alpha, \beta)\in \mathcal{L}^{\ast}$. Hence $X\alpha =X\beta$.
\end{proof}

A necessary and sufficient condition for $\alpha\in \mathcal{T}(X,Y,Z)$ to be an idempotent can be given as follows:

\begin{lemma} \label{L3.4}  Let $\alpha \in \mathcal{T}(X,Y,Z)$. Then $\alpha$ is an idempotent if and only if the following statements hold:
\\[8pt]
{\rm($i$)} $X\alpha \subseteq Z\cup(X\setminus Y)$.
\\[8pt]
{\rm ($ii$)} $t\alpha =t$ for all $t\in X\alpha$.
 \end{lemma}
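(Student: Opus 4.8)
The plan is to prove the biconditional characterizing idempotents in $\mathcal{T}(X,Y,Z)$ by translating the idempotent equation $\alpha^2 = \alpha$ into a pointwise statement. Recall that composition is written on the right, so $\alpha^2 = \alpha$ means $x\alpha\alpha = x\alpha$ for every $x \in X$. Setting $t = x\alpha$ (so $t$ ranges exactly over $X\alpha$), this says precisely that $t\alpha = t$ for all $t \in X\alpha$, which is condition $(ii)$. This gives the clean recharacterization that an idempotent is exactly a transformation fixing every point of its own range, a fact standard for $\mathcal{T}(X)$. So the content specific to $\mathcal{T}(X,Y,Z)$ is entirely in showing that condition $(i)$ is the extra constraint forcing such a fixing map to actually lie in $\mathcal{T}(X,Y,Z)$.

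For the forward direction, I would assume $\alpha$ is idempotent. Condition $(ii)$ follows immediately from the paragraph above. For condition $(i)$, take any $t \in X\alpha$; I must show $t \in Z \cup (X\setminus Y)$, equivalently that if $t \in Y$ then $t \in Z$. Suppose $t \in X\alpha \cap Y$. By $(ii)$ we have $t\alpha = t$, so $t \in Y$ gives $t = t\alpha \in Y\alpha \subseteq Z$ because $\alpha \in \mathcal{T}(X,Y,Z)$. Hence $t \in Z$, establishing $X\alpha \subseteq Z \cup (X\setminus Y)$.

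For the converse, assume $(i)$ and $(ii)$ hold. First I would verify that $\alpha \in \mathcal{T}(X,Y,Z)$ is already given by hypothesis, so I need only check $\alpha^2 = \alpha$. For any $x \in X$, set $t = x\alpha \in X\alpha$; condition $(ii)$ gives $t\alpha = t$, i.e. $x\alpha\alpha = x\alpha$. Since $x$ was arbitrary, $\alpha^2 = \alpha$, so $\alpha$ is an idempotent of $\mathcal{T}(X,Y,Z)$. Notice that condition $(i)$ is not actually needed for the converse implication as stated, since membership in $\mathcal{T}(X,Y,Z)$ is assumed; condition $(i)$ is really a necessary consequence packaged into the characterization so that the two conditions together describe the idempotents intrinsically.

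There is no serious obstacle here: the argument is a direct unwinding of the definition of idempotent together with the defining inclusion $Y\alpha \subseteq Z$. The only point requiring care is bookkeeping about the direction of composition (functions on the right) and recognizing that the genuinely new information relative to the full transformation semigroup is the placement of $X\alpha$ inside $Z \cup (X\setminus Y)$, which I would emphasize as the step that uses the structure of $\mathcal{T}(X,Y,Z)$ rather than just $\mathcal{T}(X)$.
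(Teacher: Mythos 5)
Your proof is correct and follows essentially the same route as the paper's: both rest on the observation that $\alpha^2=\alpha$ is equivalent to $\alpha$ fixing every point of $X\alpha$, and that a fixed point of $\alpha$ lying in $Y$ must lie in $Z$ because $Y\alpha\subseteq Z$. The only cosmetic difference is that the paper argues the forward direction by contradiction with a case split on which condition fails, whereas you argue it directly (and you correctly note, as the paper's own converse argument implicitly shows, that condition $(i)$ is not needed for that direction).
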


 \begin{proof} Suppose that $X\alpha \subseteq Z\cup (X\setminus Y)$ and $t\alpha =t$ for all $t\in X\alpha$. For each $x\in X$, there exists $t\in X\alpha $ such that $x\alpha =t$. Then $x\alpha ^2=(x\alpha)\alpha =t\alpha =t=x\alpha$. Hence $\alpha$ is an idempotent.

Conversely, suppose that $\alpha$ is an idempotent, and let ($i$) or ($ii$)  do not hold. To do this, we distinguish two cases:

$\mathbf{Case 1}$. ($i$) not holds and ($ii$) holds. Then  there exists $y\in X\alpha $ such that $y\in Y\setminus Z$ and so $y\alpha =y\notin Z$. This is a contradiction.

$\mathbf{Case 2}$. ($ii$) not holds. There exists $t_0\in X\alpha $ such that $t_0\alpha \neq t_0$.  Note that $x_0\alpha =t_0$ for some $x_0\in X$. Then $x_0\alpha ^2=(x_0\alpha)\alpha =t_0\alpha \neq t_0=x_0\alpha$, which contradicts the fact that $\alpha$ is an idempotent.
\end{proof}

\begin{lemma} \label{L3.5} Let $Z\subsetneq Y\subsetneq X$.  Then not each $\mathcal{L}^{\ast}$-class of $\mathcal{T}(X,Y,Z)$ contains an idempotent.
 \end{lemma}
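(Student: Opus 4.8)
The plan is to exhibit a single $\mathcal{L}^{\ast}$-class of $\mathcal{T}(X,Y,Z)$ that contains no idempotent, which suffices to prove the statement. The natural strategy is to split on the two cases appearing in Lemma~\ref{L3.3}, since the characterization of $\mathcal{L}^{\ast}$ differs between $|Z|=1$ and $|Z|\geq 2$, and in each case produce a concrete witness $\alpha\in\mathcal{T}(X,Y,Z)$ whose whole $\mathcal{L}^{\ast}$-class is idempotent-free. The key tool is Lemma~\ref{L3.4}: an idempotent $\varepsilon$ must satisfy $X\varepsilon\subseteq Z\cup(X\setminus Y)$ and fix every point of its range. So the idea is to choose $\alpha$ so that every transformation $\beta$ which is $\mathcal{L}^{\ast}$-related to $\alpha$ is forced to violate one of these two conditions.

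For the case $|Z|\geq 2$, by Lemma~\ref{L3.3}($ii$) the $\mathcal{L}^{\ast}$-class of $\alpha$ consists exactly of those $\beta$ with $X\beta=X\alpha$, i.e.\ the entire class is determined by the range. Hence I would pick $\alpha$ whose range $X\alpha$ contains a point of $Y\setminus Z$; concretely, fix $y\in Y\setminus Z$ and a point of $Z$, and define $\alpha$ so that $y\in X\alpha$ (for instance by sending some element of $X\setminus Y$ to $y$, which is allowed since only $Y$ must map into $Z$). Then any idempotent $\varepsilon$ in this class would have $X\varepsilon=X\alpha\ni y$, forcing $y\in X\varepsilon\subseteq Z\cup(X\setminus Y)$, which is impossible because $y\in Y\setminus Z$. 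Thus no idempotent is $\mathcal{L}^{\ast}$-related to this $\alpha$.

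For the case $|Z|=1$, say $Z=\{z_0\}$, the $\mathcal{L}^{\ast}$-class is governed by the coarser invariant of Lemma~\ref{L3.3}($i$): membership in $\Lambda$ together with the value of $X\alpha\cap(X\setminus Y)$. Here I would choose $\alpha$ lying in the $\Lambda$-class where $(X\setminus Y)\alpha\cap(Y\setminus Z)\neq\varnothing$ and such that $X\alpha\cap(X\setminus Y)=\varnothing$; for example, send all of $X$ into $Y$, with some element of $X\setminus Y$ hitting a point of $Y\setminus Z$ and all of $Y$ collapsing to $z_0$. Any $\mathcal{L}^{\ast}$-related idempotent $\varepsilon$ must then share both invariants, so $(X\setminus Y)\varepsilon\cap(Y\setminus Z)\neq\varnothing$, meaning $\varepsilon$ maps some $b\in X\setminus Y$ to some $w\in Y\setminus Z$; but $w=b\varepsilon\in X\varepsilon$ then contradicts $X\varepsilon\subseteq Z\cup(X\setminus Y)$ from Lemma~\ref{L3.4}. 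Hence again the class is idempotent-free.

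The main obstacle I anticipate is not the logical core—which reduces cleanly to Lemma~\ref{L3.4} once the correct $\mathcal{L}^{\ast}$-invariant is pinned down—but rather the bookkeeping of verifying that the constructed $\alpha$ genuinely lies in $\mathcal{T}(X,Y,Z)$ (i.e.\ $Y\alpha\subseteq Z$) while still placing $\alpha$ in the intended $\Lambda$-class or giving it the intended range. In the $|Z|=1$ case one must be careful that $Y\setminus Z$ is nonempty (guaranteed since $|Y|\geq 2$ as $Z\subsetneq Y$) and that $X\setminus Y\neq\varnothing$ (guaranteed by $Y\subsetneq X$), so that the required points $b$ and $w$ exist; these hypotheses $Z\subsetneq Y\subsetneq X$ are exactly what makes the construction possible, and I would flag explicitly where each strict inclusion is used.
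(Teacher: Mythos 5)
Your proposal is correct and follows essentially the same route as the paper: both arguments take a witness whose image under $X\setminus Y$ meets $Y\setminus Z$, split into the cases $|Z|=1$ and $|Z|\geq 2$ using the $\mathcal{L}^{\ast}$-characterization of Lemma~\ref{L3.3}, and derive the contradiction from $X\varepsilon\subseteq Z\cup(X\setminus Y)$ in Lemma~\ref{L3.4}. The only cosmetic difference is that the paper uses a single witness $f$ with $(X\setminus Y)f\cap(Y\setminus Z)\neq\varnothing$ for both cases, whereas you build two separate (but equivalent in spirit) witnesses; your extra condition $X\alpha\cap(X\setminus Y)=\varnothing$ in the $|Z|=1$ case is harmless but not needed.
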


 \begin{proof}  Let $f\in \mathcal{T}(X,Y,Z)$ such that $(X\setminus Y)f\cap (Y\setminus Z)\neq \varnothing$. Next, we prove that the $\mathcal{L}^{\ast}$-class $\mathcal{L}^{\ast}_{f}$ containing $f$ has no idempotents. Assume that $(f,e)\in \mathcal{L}^{\ast}$ for some  idempotent $e\in \mathcal{T}(X,Y,Z)$, then two cases are considered as follows:

  {\bf Case 1.} $|Z|=1$. Since Lemma \ref{L3.3} (1) it follows that $(X\setminus Y)e\cap (Y\setminus Z)\neq \varnothing$ and so $Xe\cap (Y\setminus Z)\neq \varnothing$. 
 
 {\bf Case 2.} $|Z|\geq 2$. From Lemma \ref{L3.3} (2) it follows that $Xe=Xf$ and so $Xe\cap (Y\setminus Z)\neq \varnothing$.
 
 However, we have $Xe\subseteq Z\cup(X\setminus Y)$ since Lemma \ref{L3.4} (1). Note that $Z\subsetneq Y\subsetneq X$, then $Xe\cap (Y\setminus Z)= \varnothing$. This is a contradiction.
\end{proof}

After that, we consider the $\mathcal{R}^{\ast}$-relation. Let $\pi _{\alpha}$ be the partition of $X$
induced by $\alpha \in \mathcal{T}(X,Y,Z)$, namely,
$$\pi _{\alpha}=\{x\alpha ^{-1}: x\in X\alpha\}.$$

\begin{lemma}\label{L3.6} Let $Z\subsetneq Y\subsetneq X$ and $\alpha, \beta \in \mathcal{T}(X,Y,Z)$. Then $(\alpha, \beta)\in \mathcal{R}^{\ast}$ if and only if $\pi _{\alpha}=\pi _{\beta}$.
 \end{lemma}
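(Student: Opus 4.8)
The plan is to reduce everything to the characterization of $\mathcal{R}^{\ast}$ in Lemma \ref{L3.2}, reading the hypothesis $\pi_{\alpha}=\pi_{\beta}$ as the statement that $\alpha$ and $\beta$ induce the \emph{same kernel}, i.e. $a\alpha=b\alpha \Leftrightarrow a\beta=b\beta$ for all $a,b\in X$ (two partitions into fibres coincide exactly when the associated equivalence relations agree). Recall that, with functions written on the right, a product acts by $t(\eta\alpha)=(t\eta)\alpha$, so the content of Lemma \ref{L3.2} is that $(\alpha,\beta)\in\mathcal{R}^{\ast}$ holds precisely when, for all $\eta,\theta\in\mathcal{T}^1(X,Y,Z)$, one has $(t\eta)\alpha=(t\theta)\alpha$ for every $t$ if and only if $(t\eta)\beta=(t\theta)\beta$ for every $t$.

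For the sufficiency ($\pi_{\alpha}=\pi_{\beta}\Rightarrow\mathcal{R}^{\ast}$) I would argue directly. Fix $\eta,\theta\in\mathcal{T}^1(X,Y,Z)$. Then $\eta\alpha=\theta\alpha$ means $(t\eta)\alpha=(t\theta)\alpha$ for every $t\in X$, which by $\pi_{\alpha}=\pi_{\beta}$ is equivalent to $(t\eta)\beta=(t\theta)\beta$ for every $t$, i.e. to $\eta\beta=\theta\beta$. Thus the criterion of Lemma \ref{L3.2} is met and $(\alpha,\beta)\in\mathcal{R}^{\ast}$. (Equivalently, $\pi_{\alpha}=\pi_{\beta}$ says that $\alpha,\beta$ are $\mathcal{R}$-related in the oversemigroup $\mathcal{T}(X)$, and hence $\mathcal{R}^{\ast}$-related in $\mathcal{T}(X,Y,Z)$.)

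For the necessity I would prove the contrapositive: assuming $\pi_{\alpha}\neq\pi_{\beta}$, I will produce $\eta,\theta$ violating Lemma \ref{L3.2}. Since the kernels differ, after possibly interchanging $\alpha$ and $\beta$ I may pick $a,b\in X$ with $a\alpha=b\alpha$ but $a\beta\neq b\beta$ (so in particular $a\neq b$). The only real difficulty is that the witnessing maps must lie in $\mathcal{T}(X,Y,Z)$, i.e. satisfy $Y\eta,Y\theta\subseteq Z$, whereas the distinguishing values $a,b$ need not belong to $Z$. This is exactly where the hypothesis $Y\subsetneq X$ enters: choose any $c\in X\setminus Y$ and a fixed $z_0\in Z$, and define $\eta,\theta\colon X\to X$ by sending $c$ to $a$, respectively to $b$, and sending every other point to $z_0$. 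Because $c\notin Y$, both maps send $Y$ into $\{z_0\}\subseteq Z$, so indeed $\eta,\theta\in\mathcal{T}(X,Y,Z)$.

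Finally I would verify that these maps do the job. For $x\neq c$ both $\eta\alpha$ and $\theta\alpha$ send $x$ to $z_0\alpha$, while $(c\eta)\alpha=a\alpha=b\alpha=(c\theta)\alpha$, so $\eta\alpha=\theta\alpha$; on the other hand $(c\eta)\beta=a\beta\neq b\beta=(c\theta)\beta$, so $\eta\beta\neq\theta\beta$. This contradicts the criterion of Lemma \ref{L3.2}, giving $(\alpha,\beta)\notin\mathcal{R}^{\ast}$ and completing the contrapositive. I expect the only genuine obstacle to be this construction, namely checking membership in $\mathcal{T}(X,Y,Z)$, and I note that, unlike the $\mathcal{L}^{\ast}$ analysis in Lemma \ref{L3.3}, no case split on $|Z|$ is required here, since a single spare point of $X\setminus Y$ already carries both distinguishing values.
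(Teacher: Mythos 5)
Your proof is correct, and its necessity half takes a genuinely different (and slicker) route than the paper's. The sufficiency direction is identical to the paper's: $\pi_{\alpha}=\pi_{\beta}$ means $\alpha$ and $\beta$ are $\mathcal{R}$-related in the oversemigroup $\mathcal{T}(X)$, hence $\mathcal{R}^{\ast}$-related in $\mathcal{T}(X,Y,Z)$. For necessity, the paper argues directly rather than by contrapositive: from $(\alpha,\beta)\in\mathcal{R}^{\ast}$ and $x_1\alpha=x_2\alpha$ it deduces $x_1\beta=x_2\beta$ via a three-case analysis ($x_1,x_2\in Z$; $x_1,x_2\in X\setminus Z$; one in each), in each case building test maps $\eta,\theta\in\mathcal{T}(X,Y,Z)$ that carry the distinguishing values $x_1,x_2$ either on $Y$ or on $X\setminus Y$ depending on whether those values lie in $Z$ --- the case split is forced precisely because a value outside $Z$ cannot be assumed on $Y$. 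You sidestep the split entirely by loading both distinguishing values $a,b$ onto a single point $c\in X\setminus Y$ (available since $Y\subsetneq X$) and collapsing everything else to $z_0\in Z$: membership in $\mathcal{T}(X,Y,Z)$ is then automatic no matter where $a$ and $b$ lie, so one construction covers all configurations. Your contrapositive phrasing does require the symmetry of $\mathcal{R}^{\ast}$ and of kernel-equality to justify interchanging $\alpha$ and $\beta$, which you correctly note. What the paper's version buys is an explicit catalogue of which test maps exist in the constrained semigroup for each placement of $x_1,x_2$; what yours buys is uniformity and brevity, at no loss of rigor.
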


 \begin{proof} Let $\pi _{\alpha}=\pi _{\beta}$. This implies that $\alpha ,\beta $ are $\mathcal{R}$-related in the full transformation semigroup $\mathcal{T}(X)$ (see \cite[page 63]{H1}). Hence $(\alpha, \beta)\in \mathcal{R}^{\ast}$.

Conversely, suppose that $(\alpha, \beta)\in \mathcal{R}^{\ast}$ and $x_1\alpha  = x_2\alpha$ for some distinct $x_1, x_2\in X$. We show that $x_1\beta  = x_2\beta $. There are three cases to be considered.

$\mathbf{Case~1}$. $x_1, x_2\in Z$. Define two mappings $\eta :X\rightarrow X$ and $\theta :X\rightarrow X$ by
$$x\eta = \left\{ \begin{array}{rl}
x_1         &\mbox{ if $x\in Y$ }\\
x         &\mbox{ if $x\in X\setminus Y$} \end{array}  \right. \mbox { and~~~~~~}x\theta = \left\{ \begin{array}{rl}
x_2         &\mbox{ if $x\in Y$ }\\
x         &\mbox{ if $x\in X\setminus Y$.} \end{array}  \right. $$
Clearly, $\eta ,\theta \in \mathcal{T}(X,Y,Z)$ and $\eta\alpha = \theta \alpha$. Then $\eta\beta =\theta\beta $ and so $x_1\beta =Y\eta\beta =Y\theta \beta =x_2\beta $.

$\mathbf{Case~2}$. $x_1, x_2\in X\backslash Z$. Choose and fix $z_0\in Z$. Define two mappings $\eta :X\rightarrow X$ and $\theta :X\rightarrow X$ by
$$x\eta = \left\{ \begin{array}{rl}
z_0         &\mbox{ if $x\in Y$ }\\
x_1         &\mbox{ if $x\in X\setminus Y$} \end{array}  \right. \mbox { and~~~~~~}x\theta = \left\{ \begin{array}{rl}
z_0         &\mbox{ if $x\in Y$ }\\
x_2       &\mbox{ if $x\in X\setminus Y$.} \end{array}  \right. $$
Clearly, $\eta ,\theta \in \mathcal{T}(X,Y,Z)$ and $\eta\alpha = \theta \alpha$. Then $\eta\beta =\theta\beta $ and so $x_1\beta =(X\setminus Y)\eta\beta =(X\setminus Y)\theta \beta =x_2\beta $.

$\mathbf{Case~3}$. $x_1\in Z$ and $x_2\in X\backslash Z$. Define two mappings $\eta :X\rightarrow X$ and $\theta :X\rightarrow X$ by $X\eta =x_1$ and
$$x\theta = \left\{ \begin{array}{rl}
x_1         &\mbox{ if $x\in Y$ }\\
x_2         &\mbox{ if $x\in X\setminus Y$.} \end{array}  \right. $$
Clearly, $\eta ,\theta \in \mathcal{T}(X,Y,Z)$ and $\eta\alpha = \theta \alpha$. Then $\eta\beta =\theta\beta $ and so $x_1\beta =(X\setminus Y)\eta\beta =(X\setminus Y)\theta \beta =x_2\beta $.

For both cases, we have $\pi _{\alpha }\subseteq \pi _{\beta }$. Dually, we may show that $\pi _{\beta }\subseteq \pi _{\alpha }$. Consequently, $\pi _{\alpha }=\pi _{\beta }$.
\end{proof}

\begin{lemma} \label{L3.7} Let $Z\subsetneq Y\subsetneq X$. Then the following statements hold:
\\[8pt]
{\rm($i$)} for $|Z|=1$, each $\mathcal{R}^{\ast}$-class of $\mathcal{T}(X,Y,Z)$ contains an idempotent.
\\[8pt]
{\rm($ii$)} for $|Z|\geq 2$, not each $\mathcal{R}^{\ast}$-class of $\mathcal{T}(X,Y,Z)$ contains an idempotent.
 \end{lemma}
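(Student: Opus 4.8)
The plan is to read off the $\mathcal{R}^{\ast}$-classes from the partition invariant supplied by Lemma \ref{L3.6} and then test each class against the idempotent description of Lemma \ref{L3.4}. Since Lemma \ref{L3.6} says $(\alpha,\beta)\in\mathcal{R}^{\ast}$ exactly when $\pi_{\alpha}=\pi_{\beta}$, an $\mathcal{R}^{\ast}$-class is nothing but a fibre of the map $\alpha\mapsto\pi_{\alpha}$, and asking whether it contains an idempotent is the same as asking whether there is an idempotent $e\in\mathcal{T}(X,Y,Z)$ with $\pi_{e}=\pi_{\alpha}$. So the first thing I would do is isolate the technical heart of the statement as a single criterion: \emph{an idempotent of $\mathcal{T}(X,Y,Z)$ realising a given partition $\pi$ exists if and only if every block of $\pi$ that meets $Y$ also meets $Z$.} Both parts of the lemma then drop out of this criterion.

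To prove the criterion I would use Lemma \ref{L3.4} in both directions. By that lemma an idempotent $e$ collapses each block $B$ of $\pi_{e}$ onto a single fixed point $p_{B}\in B$ with $p_{B}\in Z\cup(X\setminus Y)$; and $e\in\mathcal{T}(X,Y,Z)$ forces $Ye\subseteq Z$, so as soon as $B\cap Y\neq\varnothing$ the value $p_{B}$ (the common image of the points of $B\cap Y$) must lie in $Z$. Since $p_{B}\in B$, such a block meets $Z$. Conversely, given $\pi$ in which every $Y$-meeting block also meets $Z$, I would build $e$ by choosing for each $Y$-meeting block a representative in $B\cap Z$ and for every remaining block (necessarily a subset of $X\setminus Y$) an arbitrary representative in $B$, then mapping each block constantly onto its representative; the representatives are fixed points lying in $Z\cup(X\setminus Y)$ and the image of $Y$ stays inside $Z$, so Lemma \ref{L3.4} certifies $e$ as an idempotent of $\mathcal{T}(X,Y,Z)$ with $\pi_{e}=\pi$.

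Part ($i$) is then immediate. When $|Z|=1$, say $Z=\{z_{0}\}$, every $\alpha\in\mathcal{T}(X,Y,Z)$ satisfies $Y\alpha=\{z_{0}\}$, so all of $Y$ sits inside the one block $z_{0}\alpha^{-1}$ while every other block of $\pi_{\alpha}$ is contained in $X\setminus Y$. Hence the only block meeting $Y$ already contains $z_{0}\in Z$, the criterion holds, and the $\mathcal{R}^{\ast}$-class of $\alpha$ contains an idempotent. For part ($ii$), when $|Z|\geq 2$ I would defeat the criterion with one explicit transformation: choose distinct $z_{1},z_{2}\in Z$ and some $y_{0}\in Y\setminus Z$ (available since $Z\subsetneq Y$), and define $\alpha$ to send $y_{0}$ to $z_{2}$, every other point of $Y$ to $z_{1}$, and to fix $X\setminus Y$ pointwise. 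Then $Y\alpha=\{z_{1},z_{2}\}\subseteq Z$ so $\alpha\in\mathcal{T}(X,Y,Z)$, and $z_{2}\alpha^{-1}=\{y_{0}\}$ is a block meeting $Y$ but disjoint from $Z$; by the criterion no idempotent shares $\pi_{\alpha}$, so the $\mathcal{R}^{\ast}$-class of $\alpha$ carries no idempotent.

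The only delicate point, and the place where the whole argument really turns, is the backward direction of the criterion: I must verify that collapsing each block onto its chosen representative produces a map that simultaneously lands $Y$ inside $Z$ and has every image point a fixed point of $Z\cup(X\setminus Y)$, exactly as Lemma \ref{L3.4} demands. This is where the hypothesis that $Y$-meeting blocks meet $Z$ is consumed, and it is the sole piece of genuine bookkeeping; once it is settled, parts ($i$) and ($ii$) are routine specialisations.
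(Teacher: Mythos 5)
Your proposal is correct and follows essentially the same route as the paper: both reduce $\mathcal{R}^{\ast}$-classes to partition fibres via Lemma \ref{L3.6} and then use Lemma \ref{L3.4}, constructing an idempotent from block representatives for ($i$) and exhibiting a transformation whose partition has a $Y$-meeting block disjoint from $Z$ for ($ii$). Your explicit criterion (an idempotent realises $\pi$ iff every $Y$-meeting block meets $Z$) is just a clean packaging of the two directions the paper proves separately, and your counterexample for ($ii$) differs only in detail from the paper's $f$ with $Zf=z_1$, $(Y\setminus Z)f=z_2$.
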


 \begin{proof} ($i$) Let $\alpha \in \mathcal{T}(X,Y,Z)$. Then exists an index set $I$ such that $\pi _{\alpha}=\{A_i: i\in I\}$.  Note that $Y\alpha \subseteq Z$ and $|Z|=1$, there exists $i\in I$ such that $Y\subseteq A_i$. Take $z_0\in Z$ and $a_j\in A_j$ for all $j\in I\setminus \{i\}$. Define a mapping $e:X\rightarrow X$ by
$$xe= \left\{ \begin{array}{rl}
z_0         &\mbox{ if $x\in A_i$ }\\
a_j         &\mbox{ if $x\in A_j$ for all $j\in I\setminus \{i\}$. } \end{array}  \right. $$
Clearly, $e\in \mathcal{T}(X,Y,Z)$ is an idempotent and $\pi _{\alpha }=\pi _{e}$. By Lemma \ref{L3.6}, we have  $(\alpha, e)\in \mathcal{R}^{\ast}$. Hence each $\mathcal{R}^{\ast}$-class of $\mathcal{T}(X,Y,Z)$ contains an idempotent.

($ii$) By $|Z|\geq 2$, we can take distinct $z_1 ,z_2\in Z$. Define $f\in \mathcal{T}(X,Y,Z)$ such that $Zf =z_1$ and $(Y\setminus Z)f =z_2$. Then $Z\subseteq A_i $ and $(Y\setminus Z)\subseteq A_j$ for some distinct $A_i, A_j\in \pi _{f}=\{A_j: j\in J\}$ where $J$ be some index set. We assert that the $\mathcal{R}^{\ast}$-class $\mathcal{R}^{\ast}_{f}$ containing $f$ has no idempotents. Indeed, if $(f,e)\in \mathcal{R}^{\ast}$ for some idempotent $e\in \mathcal{T}(X,Y,Z)$. Then, by Lemma \ref{L3.6} it follows that $\pi _{e}=\pi _{f}$. According to Lemma \ref{L3.4}, $|A_je\cap A_j|=1$ and so $(Y\setminus Z)e=A_je\in A_j$.  Note that $Z\subseteq A_i$ and $A_i\cap A_j=\varnothing$. Then $(Y\setminus Z)e\cap Z=\varnothing$. This contradicts the fact that $(Y\setminus Z)e\subsetneq Ye\subseteq Z$.
\end{proof}

By Lemmas \ref{L3.5} and \ref{L3.7}, we obtain the main result in this section.

\begin{theorem} \label{T3.8}   Let $Z\subsetneq Y\subsetneq X$. Then the following statements hold:
\\[8pt]
{\rm ($i$)} for $|Z|=1$, the semigroup $\mathcal{T}(X,Y,Z)$ is  right abundant.
\\[8pt]
{\rm ($ii$)} for $|Z|\geq 2$, the semigroup $\mathcal{T}(X,Y,Z)$ is neither left
abundant nor right abundant.
\end{theorem}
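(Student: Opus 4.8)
The plan is to read off both statements directly from the definitions of left and right abundance together with Lemmas~\ref{L3.5} and~\ref{L3.7}; the genuine combinatorial work has already been done in those lemmas, so this theorem is essentially a packaging step. First I would recall the definitions fixed in the introduction: a semigroup is \emph{right abundant} precisely when every $\mathcal{R}^{\ast}$-class contains an idempotent, and \emph{left abundant} precisely when every $\mathcal{L}^{\ast}$-class does. With these in hand the proof amounts to matching the conclusions of the two lemmas to the correct notion.

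For part~($i$), with $|Z|=1$, Lemma~\ref{L3.7}($i$) asserts exactly that every $\mathcal{R}^{\ast}$-class of $\mathcal{T}(X,Y,Z)$ contains an idempotent, which is by definition right abundance; so ($i$) follows immediately. One may also note, although it is not part of the stated conclusion, that Lemma~\ref{L3.5} exhibits an $\mathcal{L}^{\ast}$-class with no idempotent, so in this regime $\mathcal{T}(X,Y,Z)$ is right abundant but not left abundant, hence not abundant.

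For part~($ii$), with $|Z|\geq 2$, I would invoke the two lemmas separately. Lemma~\ref{L3.5}, which holds throughout the regime $Z\subsetneq Y\subsetneq X$ with no restriction on $|Z|$, produces an $\mathcal{L}^{\ast}$-class containing no idempotent, so $\mathcal{T}(X,Y,Z)$ fails to be left abundant. Lemma~\ref{L3.7}($ii$) produces an $\mathcal{R}^{\ast}$-class containing no idempotent, so it fails to be right abundant as well. Combining the two gives that $\mathcal{T}(X,Y,Z)$ is neither left abundant nor right abundant, as claimed.

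There is no substantive obstacle at this stage, since the explicit transformations testing $\mathcal{L}^{\ast}$ and $\mathcal{R}^{\ast}$ and the range condition $Xe\subseteq Z\cup(X\setminus Y)$ for idempotents (via Lemma~\ref{L3.4}) were all established earlier. The only points requiring care are keeping the correspondence $\mathcal{L}^{\ast}\leftrightarrow$ left abundant and $\mathcal{R}^{\ast}\leftrightarrow$ right abundant straight, and observing that Lemma~\ref{L3.5} applies uniformly to both cases whereas Lemma~\ref{L3.7} splits according to whether $|Z|=1$ or $|Z|\geq 2$.
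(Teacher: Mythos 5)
Your proposal is correct and follows exactly the same route as the paper, which also derives Theorem~\ref{T3.8} immediately from Lemma~\ref{L3.5} (failure of left abundance for all $|Z|$) and Lemma~\ref{L3.7} (right abundance iff $|Z|=1$). Your matching of $\mathcal{L}^{\ast}$-classes to left abundance and $\mathcal{R}^{\ast}$-classes to right abundance, and your observation that Lemma~\ref{L3.5} applies uniformly while Lemma~\ref{L3.7} splits on $|Z|$, is precisely the intended argument.
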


As a consequence of Lemma \ref{L1.1}, Lemma \ref{L1.2} and Theorem \ref{T3.8}, we have the following conclusion.

\begin{corollary}\label{C3.9} {\rm (I)} for $Z=Y=X$, the semigroup $\mathcal{T}(X,Y,Z)=\mathcal{T}(X)$ is abundant.
\\[8pt]
{\rm (II)} for $Z\subsetneq Y=X$,

{\rm ($i$)} $|Z|=1$, the semigroup $\mathcal{T}(X,Y,Z)=\mathcal{T}(X,Z)$ is abundant.

{\rm ($ii$)} $|Z|\geq 2$, the semigroup $\mathcal{T}(X,Y,Z)=\mathcal{T}(X,Z)$ is left abundant but not right abundant.
\\[8pt]
{\rm (III)} for $Z=Y\subsetneq X$, the semigroup $\mathcal{T}(X,Y,Z)=\mathcal{\overline{T}}(X,Y)$ is abundant.
\\[8pt]
{\rm (IV)} for $Z\subsetneq Y\subsetneq X$,

{\rm ($i$)} $|Z|=1$, the semigroup $\mathcal{T}(X,Y,Z)$ is  right abundant.

{\rm ($ii$)} $|Z|\geq 2$, the semigroup $\mathcal{T}(X,Y,Z)$ is neither left
abundant nor right abundant.
\end{corollary}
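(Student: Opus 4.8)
The plan is to treat Corollary \ref{C3.9} as a bookkeeping exercise rather than as a result requiring new ideas: the four clauses (I)--(IV) correspond exactly to the four ways of resolving the two containments $Z\subseteq Y$ and $Y\subseteq X$ into either an equality or a strict inclusion. So the first thing I would do is observe that these four cases are mutually exclusive and jointly exhaustive over all admissible triples with $\varnothing\neq Z\subseteq Y\subseteq X$, and then dispatch each case to an already-established statement. The two ingredients that make the dispatching possible are the reductions recorded just after the definition of $\mathcal{T}(X,Y,Z)$ in the introduction, namely $\mathcal{T}(X,Y,Z)=\mathcal{\overline{T}}(X,Y)$ when $Z=Y$ and $\mathcal{T}(X,Y,Z)=\mathcal{T}(X,Z)$ when $Y=X$ (and $\mathcal{T}(X,Y,Z)=\mathcal{T}(X)$ when $Z=Y=X$).

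I would then handle the cases in turn. For case (I), $Z=Y=X$ gives $\mathcal{T}(X,Y,Z)=\mathcal{T}(X)$, which is regular and hence abundant, as recalled in the introduction. For case (III), $Z=Y\subsetneq X$ gives $\mathcal{T}(X,Y,Z)=\mathcal{\overline{T}}(X,Y)$, so abundance is immediate from Lemma \ref{L1.1}. For case (II), $Z\subsetneq Y=X$ gives $\mathcal{T}(X,Y,Z)=\mathcal{T}(X,Z)$, and here I would split on $|Z|$: when $|Z|=1$ the semigroup $\mathcal{T}(X,Z)$ consists of the single constant map with image $Z$, hence is regular by Lemma \ref{L2.4} and trivially abundant, giving clause (II)(i); when $|Z|\geq 2$, Lemma \ref{L1.2} states that $\mathcal{T}(X,Z)$ is left abundant but not right abundant, which is exactly clause (II)(ii).

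Finally, case (IV), $Z\subsetneq Y\subsetneq X$, is precisely the hypothesis of Theorem \ref{T3.8}, so clauses (IV)(i) and (IV)(ii) are a verbatim restatement of that theorem's two parts (distinguished again by $|Z|=1$ versus $|Z|\geq 2$). I do not anticipate a genuine obstacle in this proof: the only point requiring attention beyond citing prior results is verifying that the case split is exhaustive and that each of the three degenerate specializations of $\mathcal{T}(X,Y,Z)$ is correctly identified before the matching lemma is applied. Once those identifications are in place, the corollary follows by merely assembling Lemmas \ref{L1.1} and \ref{L1.2}, the regularity criterion of Lemma \ref{L2.4}, and Theorem \ref{T3.8}.
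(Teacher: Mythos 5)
Your proposal is correct and matches the paper's approach exactly: the paper likewise presents Corollary \ref{C3.9} as an immediate consequence of Lemma \ref{L1.1}, Lemma \ref{L1.2} and Theorem \ref{T3.8}, with cases (I) and (II)($i$) handled by the observations already made in the introduction (regularity of $\mathcal{T}(X)$, and $\mathcal{T}(X,Z)$ being a one-element semigroup when $|Z|=1$). Your case-by-case dispatch, including the check that the four cases exhaust all admissible triples, is precisely the intended argument.
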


\section{Some combinatorial results}

The Stirling number of the second kind $S(n,r)$ counts the number of partitions of a set of $n$ elements into $r$ indistinguishable boxes in which no box is empty. Recall that the number of ways that $r$ objects can be chosen from $n$ distinct
objects written $\binom {n}{r}$ is given by
$$\binom {n}{r}=\frac{n!}{(n-r)!r!}.$$
It is shown in \cite[Theorem 8.26]{B2} that
 $$S(n,r)=\frac{1}{r!}\sum _{i=0}^r(-1)^i\binom {r}{i}(r-i)^n.$$
for integers $n$ and $r$ with $0\leq r\leq n$. In particular, $S(p,0)=0~(p\geq 1)$ and $S(0,0)=1$. B\'{o}na \cite{B3} also presented a formula related Stirling number, that is,

\begin{lemma}{\rm (\cite[page 32]{B3})} \label{L4.1}  Let $m,k\in \mathbb{N}$ such that $1\leq k\leq m$. Then
$$\sum _{r=1}^k\binom {k}{r}r!S(m,r)=k^m.$$
 \end{lemma}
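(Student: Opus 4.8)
The plan is to establish this classical Stirling-number identity by a double-counting argument, interpreting both sides as the number of functions from a fixed $m$-element set into a fixed $k$-element set. Let $M$ and $K$ be sets with $|M|=m$ and $|K|=k$. On one hand, the total number of functions $f\colon M\rightarrow K$ is $k^m$, which is the right-hand side. On the other hand, I would classify these functions according to the cardinality $r=|Mf|$ of their image. Since $M\neq\varnothing$ and $Mf\subseteq K$, the integer $r$ ranges exactly over $1\leq r\leq k$; this matches the summation range (and indeed $S(m,r)=0$ whenever $r>m$, so the hypothesis $k\leq m$ causes no loss).

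For a fixed $r$, I would count the functions with $|Mf|=r$ in two stages. First, the image can be any $r$-element subset of $K$, giving $\binom{k}{r}$ choices. Second, once the image set is fixed, $f$ restricts to a surjection of $M$ onto this $r$-element set, and I would show that the number of such surjections is exactly $r!\,S(m,r)$: each surjection determines, and is determined by, a partition of $M$ into $r$ nonempty blocks (its nonempty fibres $xf^{-1}$, $x\in Mf$, of which there are $S(m,r)$ by the definition of the Stirling number recalled above) together with a bijection matching these blocks to the $r$ elements of the image (of which there are $r!$). Multiplying these counts and summing over $r$ gives $\sum_{r=1}^{k}\binom{k}{r}r!\,S(m,r)$ for the same collection of functions, and equating the two counts yields the identity.

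The argument is essentially routine, so there is no serious obstacle; the one point deserving care is the bijection in the second stage, namely that a surjection onto a labelled $r$-element set corresponds uniquely to a pair consisting of a partition of $M$ into $r$ indistinguishable nonempty blocks and a bijection from those blocks to the labels. Verifying this correspondence is exactly where the stated definition of $S(m,r)$ as the count of partitions into $r$ indistinguishable nonempty boxes enters, and one must check that the factor $r!$ correctly converts the indistinguishable blocks into a labelled surjection with no overcounting. Alternatively, one could argue algebraically from the known expansion $x^m=\sum_{r}S(m,r)\,x(x-1)\cdots(x-r+1)$ of a power into falling factorials: substituting $x=k$ and using $k(k-1)\cdots(k-r+1)=r!\binom{k}{r}$ recovers the identity directly, with the vanishing of the falling factorial for $r>k$ again trimming the sum to $1\leq r\leq k$.
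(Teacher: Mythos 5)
Your proof is correct. Note, however, that the paper does not prove this lemma at all: it is quoted as a known result from B\'{o}na's \emph{Combinatorics of Permutations} (the citation on page 32), so there is no in-paper argument to compare against. Your double-counting proof --- classifying functions $f\colon M\rightarrow K$ by the size $r$ of the image, choosing the image in $\binom{k}{r}$ ways, and counting surjections onto a fixed $r$-set as $r!\,S(m,r)$ via the fibre-partition/labelling bijection --- is the standard textbook proof of this identity, and you handle the edge cases (the range of $r$, the role of the hypothesis $k\leq m$) correctly. The alternative algebraic route you sketch, substituting $x=k$ into the falling-factorial expansion $x^m=\sum_{r}S(m,r)\,x(x-1)\cdots(x-r+1)$, is equally valid and is essentially the same identity read off from the polynomial side; either argument fully justifies the lemma that the paper takes on faith.
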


\begin{lemma} \label{L4.2}  Let $|X|=n$, $|Y|=m$ and $|Z|=k$. Then, for each $r\in \mathbb{N}$ with $1\leq r\leq k$,
\begin{equation} \label{E4.1}
  |\{\alpha \in \mathcal{T}(X,Y,Z): |Y\alpha |=r\}|=\binom {k}{r}r!S(m,r)n^{n-m}.
\end{equation}
\end{lemma}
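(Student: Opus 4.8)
The plan is to exploit the fact that an element $\alpha\in\mathcal{T}(X,Y,Z)$ is freely determined by its two restrictions $\alpha|_Y$ and $\alpha|_{X\setminus Y}$, and that the defining constraint $Y\alpha\subseteq Z$ together with the prescribed value $|Y\alpha|=r$ affects only the first of these. First I would observe that a function $\alpha\colon X\to X$ lies in $\mathcal{T}(X,Y,Z)$ and satisfies $|Y\alpha|=r$ if and only if $\alpha|_Y$ is a map from $Y$ into $Z$ whose image has exactly $r$ elements, while $\alpha|_{X\setminus Y}$ is an arbitrary map from $X\setminus Y$ into $X$. Since these two restrictions can be chosen independently of one another, the count we seek factors as a product of the number of admissible $\alpha|_Y$ and the number of admissible $\alpha|_{X\setminus Y}$.

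Next I would count the admissible restrictions $\alpha|_Y$. Such a map is a function from the $m$-element set $Y$ into the $k$-element set $Z$ whose image is some $r$-element subset. I would count these in two stages: choose the $r$-element image set $W\subseteq Z$, which can be done in $\binom{k}{r}$ ways, and then count the surjections from $Y$ onto the fixed set $W$. The number of surjections from an $m$-element set onto an $r$-element set is $r!\,S(m,r)$, since every surjection corresponds to a partition of $Y$ into $r$ nonempty blocks (the fibres), of which there are $S(m,r)$, followed by a bijection between these blocks and $W$, of which there are $r!$. Hence there are exactly $\binom{k}{r}r!\,S(m,r)$ choices for $\alpha|_Y$.

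For the remaining restriction $\alpha|_{X\setminus Y}$ there is no constraint at all, so each of the $n-m$ points of $X\setminus Y$ may be sent to any of the $n$ points of $X$, giving $n^{\,n-m}$ choices. Multiplying the two independent counts yields the formula in~\eqref{E4.1}. The argument is essentially pure bookkeeping, so I do not anticipate a genuine obstacle; the only point requiring care is the justification that the number of surjections from an $m$-set onto an $r$-set equals $r!\,S(m,r)$, together with the implicit check that $1\le r\le k$ is exactly the range in which such maps exist, which holds because $Y\neq\varnothing$ forces $r\ge 1$ and $Z\subseteq Y$ forces $k\le m$ and hence $r\le m$.
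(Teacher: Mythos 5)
Your proof is correct and follows essentially the same route as the paper: fix (or choose) an $r$-element subset $Z'\subseteq Z$, count the $r!\,S(m,r)$ surjections of $Y$ onto it, multiply by the $n^{n-m}$ unconstrained choices on $X\setminus Y$, and account for the $\binom{k}{r}$ choices of $Z'$. The only difference is that you spell out the independence of the two restrictions and the surjection count, which the paper compresses into ``it is easy to see.''
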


\begin{proof} Let $Z ^{\prime}$ be a nonempty subset of $Z$ with $|Z ^{\prime}|=r$, we have $1\leq r\leq k$ since $|Z|=k$. It is easy to see that the number of mappings $\alpha :X\rightarrow X$ such that $Y\alpha =Z ^{\prime}$ and $(X\setminus Y)\alpha \subseteq X$ is $r!S(m,r)n^{n-m}$, that is, $$|\{\alpha \in \mathcal{T}(X,Y,Z): Y\alpha =Z ^{\prime}\}|=r!S(m,r)n^{n-m}.$$
Consequently,  Equation (\ref{E4.1}) holds for each $r\in \mathbb{N}$ with $1\leq r\leq k$.
\end{proof}

\begin{theorem} \label{T4.3}  Let $|X|=n$, $|Y|=m$ and $|Z|=k$. Then
\begin{equation} \label{E4.2}
|\mathcal{T}(X,Y,Z)|=\sum _{r=1}^k\binom {k}{r}r!S(m,r)n^{n-m}=k^mn^{n-m}.
\end{equation}
\end{theorem}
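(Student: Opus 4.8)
The plan is to decompose $\mathcal{T}(X,Y,Z)$ into the disjoint pieces on which $|Y\alpha|$ is constant, count each piece with Lemma \ref{L4.2}, and then collapse the resulting sum using B\'{o}na's identity (Lemma \ref{L4.1}). First I would observe that for every $\alpha \in \mathcal{T}(X,Y,Z)$ the image $Y\alpha$ is a nonempty subset of $Z$, so that $1 \leq |Y\alpha| \leq |Z| = k$; here nonemptiness comes from $Y \neq \varnothing$, and the upper bound from $Y\alpha \subseteq Z$. Hence the value $r := |Y\alpha|$ ranges precisely over the integers $1 \leq r \leq k$, and the sets
\begin{equation*}
\mathcal{T}_r := \{\alpha \in \mathcal{T}(X,Y,Z): |Y\alpha| = r\}, \qquad r = 1, 2, \ldots, k,
\end{equation*}
form a partition of $\mathcal{T}(X,Y,Z)$ into pairwise disjoint classes.

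Next I would apply the addition principle together with Lemma \ref{L4.2}, which supplies $|\mathcal{T}_r| = \binom{k}{r} r! S(m,r) n^{n-m}$ for each $r$ in the admissible range. Summing over the partition gives the first equality in (\ref{E4.2}):
\begin{equation*}
|\mathcal{T}(X,Y,Z)| = \sum_{r=1}^{k} |\mathcal{T}_r| = \sum_{r=1}^{k} \binom{k}{r} r! S(m,r) n^{n-m}.
\end{equation*}
Finally I would factor the constant $n^{n-m}$ out of the sum and invoke Lemma \ref{L4.1} with the substitution $(m,k) \mapsto (m,k)$ (noting that $1 \leq k \leq m$ holds since $Z \subseteq Y$), which yields $\sum_{r=1}^{k} \binom{k}{r} r! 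S(m,r) = k^m$ and hence the second equality $|\mathcal{T}(X,Y,Z)| = k^m n^{n-m}$.

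I do not expect any genuine obstacle in this argument, since both the counting of each fibre and the closed-form evaluation of the sum are handed to us by the preceding lemmas. The only point requiring a moment of care is the book-keeping on the index range: one must confirm that $r$ runs over exactly $1$ through $k$ (not $0$ through $k$, and not up to $m$), so that the summation matches the hypothesis $1 \leq k \leq m$ of Lemma \ref{L4.1} and no empty or out-of-range fibres are accidentally included. Once that range is pinned down, the two displayed equalities follow immediately.
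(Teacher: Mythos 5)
Your proposal is correct and follows essentially the same route as the paper's own proof: partition $\mathcal{T}(X,Y,Z)$ by the value $r=|Y\alpha|$, count each class via Lemma \ref{L4.2}, and collapse the sum with Lemma \ref{L4.1}. Your extra care about the index range $1\leq r\leq k$ and the hypothesis $1\leq k\leq m$ is a harmless (and welcome) elaboration of details the paper leaves implicit.
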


\begin{proof} According to Lemma \ref{L4.2}, we have
$$|\{\alpha \in \mathcal{T}(X,Y,Z): |Y\alpha |=r\}|=\binom {k}{r}r!S(m,r)n^{n-m}$$
for each $r\in \mathbb{N}$ with $1\leq r\leq k$. Then $|\mathcal{T}(X,Y,Z)|=\sum _{r=1}^k\binom {k}{r}r!S(m,r)n^{n-m}$ by the summing up over all $r$. Moreover, from Lemma \ref{L4.1} it follows that $\sum _{r=1}^k\binom {k}{r}r!S(m,r)n^{n-m}=k^mn^{n-m}$. Hence,  Equation (\ref{E4.2}) as required.
\end{proof}

Since Theorem \ref{T4.3}, we obtain the following corollary which appears in \cite[page 311]{N}.

\begin{corollary} \label{C4.4}  Let $|X|=n$, $|Y|=m$ and $|Z|=k$. Then
\\[8pt]
{\rm ($i$)} $|\mathcal{\overline{T}}(X,Y)|=\sum _{r=1}^m\binom {m}{r}r!S(m,r)n^{n-m}=m^mn^{n-m}.$
\\[8pt]
{\rm ($ii$)} $|\mathcal{T}(X,Z)|=\sum _{r=1}^k \binom {k}{r}r!S(n,r)=k^n.$
\\[8pt]
{\rm ($iii$)} $|\mathcal{T}(X)|=\sum _{r=1}^n\binom {n}{r}r!S(n,r)=n^n.$
\end{corollary}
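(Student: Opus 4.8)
The plan is to read off all three cardinalities as immediate specializations of Theorem \ref{T4.3}, using the identifications already recorded in Section \ref{intro}. Recall from there that $\mathcal{\overline{T}}(X,Y)=\mathcal{T}(X,Y,Z)$ when $Z=Y$, that $\mathcal{T}(X,Z)=\mathcal{T}(X,Y,Z)$ when $Y=X$, and that $\mathcal{T}(X)=\mathcal{T}(X,Y,Z)$ when $Z=Y=X$. Since Theorem \ref{T4.3} expresses $|\mathcal{T}(X,Y,Z)|$ purely in terms of the cardinals $n=|X|$, $m=|Y|$, $k=|Z|$ as $\sum_{r=1}^{k}\binom{k}{r}r!\,S(m,r)\,n^{n-m}=k^{m}n^{n-m}$, each of the three cases reduces to substituting the relevant equality among these cardinals into this two-sided formula.

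For part ($i$) I would set $Z=Y$, so that $k=m$; the summation index then runs up to $m$ and the closed form collapses to $m^{m}n^{n-m}$. For part ($ii$) I would set $Y=X$, so that $m=n$ and the outer factor $n^{n-m}=n^{0}=1$ disappears, leaving $\sum_{r=1}^{k}\binom{k}{r}r!\,S(n,r)=k^{n}$. For part ($iii$) I would set $Z=Y=X$, so that $k=m=n$ and again $n^{n-m}=1$, which yields $\sum_{r=1}^{n}\binom{n}{r}r!\,S(n,r)=n^{n}$.

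The only point requiring any care — and it is not genuinely an obstacle — is tracking the collapse of the exponent $n-m$ to $0$ in parts ($ii$) and ($iii$), so that the outer factor $n^{n-m}$ drops out cleanly. The real combinatorial content, namely the appeal to B\'{o}na's identity (Lemma \ref{L4.1}) that converts the Stirling-number sum into a pure power, has already been discharged inside the proof of Theorem \ref{T4.3}. Consequently no new argument is needed here, and the corollary follows by specialization of parameters alone.
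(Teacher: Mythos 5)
Your proposal is correct and matches the paper's intent exactly: the paper derives Corollary \ref{C4.4} as an immediate specialization of Theorem \ref{T4.3} via the identifications $Z=Y$, $Y=X$, and $Z=Y=X$ from the introduction, with no additional argument. Your explicit tracking of the parameter substitutions ($k=m$, $m=n$, and the collapse of $n^{n-m}$ to $1$) simply writes out the details the paper leaves implicit.
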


Next, we determine the  number of all regular elements in the semigroup $\mathcal{T}(X,Y,Z)$ when $X$ is finite.

\begin{theorem} \label{T4.5}  Let $|X|=n$, $|Y|=m$ and $|Z|=k$. Then
\begin{equation} \label{E4.3}
|{\rm Reg}(\mathcal{T}(X,Y,Z))|=\sum _{r=1}^k\binom {k}{r}r!S(k,r)r^{m-k}(n-m+r)^{n-m}.
\end{equation}
\end{theorem}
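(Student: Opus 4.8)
The plan is to count the regular elements directly from the characterization in Theorem \ref{T2.1}. By that theorem, $\alpha \in \mathcal{T}(X,Y,Z)$ is regular precisely when $X\alpha \cap Y = Z\alpha$. Since the chain $Z\alpha \subseteq Y\alpha \subseteq X\alpha \cap Y$ always holds (as already observed in the proof of Theorem \ref{T2.1}), this condition is equivalent to the triple equality $Z\alpha = Y\alpha = X\alpha \cap Y$. I would therefore stratify ${\rm Reg}(\mathcal{T}(X,Y,Z))$ according to the value $r := |Z\alpha|$, which runs over $1\leq r\leq k$ because $\varnothing \neq Z\alpha \subseteq Z$ and $|Z|=k$. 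Writing $W := Z\alpha$ for the common image, the goal becomes to count, for each $r$, the regular $\alpha$ with $|W|=r$, and then to sum.

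For a fixed $r$, I would reconstruct each such $\alpha$ from independent data on the three blocks $Z$, $Y\setminus Z$ and $X\setminus Y$. First choose $W\subseteq Z$ with $|W|=r$: there are $\binom{k}{r}$ choices. The restriction $\alpha|_Z$ must be a surjection of the $k$-element set $Z$ onto $W$, contributing $r!\,S(k,r)$. The equality $Y\alpha = W$ forces $(Y\setminus Z)\alpha \subseteq W$ (the reverse inclusion $W\subseteq Y\alpha$ is automatic from $Z\alpha = W$ and $Z\subseteq Y$), so each of the $m-k$ points of $Y\setminus Z$ has exactly $r$ admissible images, giving $r^{m-k}$. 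Finally, the condition $X\alpha \cap Y = W$ says that no point of $X$ may be sent into $Y\setminus W$; for $x\in X\setminus Y$ this means $x\alpha \in W\cup(X\setminus Y)$, a set of cardinality $n-m+r$, so the $n-m$ points of $X\setminus Y$ contribute $(n-m+r)^{n-m}$.

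Multiplying these independent factors gives $\binom{k}{r}r!\,S(k,r)\,r^{m-k}(n-m+r)^{n-m}$ regular elements with $|Z\alpha|=r$, and summing over $r$ from $1$ to $k$ yields Equation (\ref{E4.3}). To make this rigorous I would exhibit $\alpha \mapsto (W,\alpha|_Z,\alpha|_{Y\setminus Z},\alpha|_{X\setminus Y})$ as a bijection onto the set of admissible tuples described above, so that the product rule applies on each stratum and no element is counted twice. The only step that is more than bookkeeping is the block $X\setminus Y$: one must notice that regularity does \emph{not} confine these points to $Z$, but merely forbids them from meeting $Y\setminus W$, so their admissible target is $W\cup(X\setminus Y)$ rather than all of $X$ or all of $Z$; this is where the factor $(n-m+r)^{n-m}$ (as opposed to $n^{n-m}$, cf. Lemma \ref{L4.2}) comes from. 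The degenerate cases $m=k$ and $m=n$ are absorbed automatically through the conventions $r^{0}=1$ and $(n-m+r)^{0}=1$, and the specializations $Y=X$, $Z=Y$, $Z=Y=X$ recover the expected counts for $\mathcal{T}(X,Z)$, $\mathcal{\overline{T}}(X,Y)$ and $\mathcal{T}(X)$ in agreement with Corollaries \ref{C2.3}, \ref{C2.2} and Lemma \ref{L4.1}.
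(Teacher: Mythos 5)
Your proof is correct and follows essentially the same route as the paper: both stratify ${\rm Reg}(\mathcal{T}(X,Y,Z))$ by the common image $W=Z\alpha$ (called $Z'$ in the paper), characterize the regular elements with that image by the three block conditions ($\alpha|_Z$ surjective onto $W$, $(Y\setminus Z)\alpha\subseteq W$, $(X\setminus Y)\alpha\subseteq W\cup(X\setminus Y)$), and multiply $\binom{k}{r}\,r!\,S(k,r)\,r^{m-k}(n-m+r)^{n-m}$ before summing over $r$. The only difference is cosmetic: you phrase the correspondence as an explicit bijection onto tuples of restrictions, while the paper verifies the equality of the two sets of maps directly.
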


\begin{proof} For each $\alpha \in {\rm Reg}(\mathcal{T}(X,Y,Z))$, we have $X\alpha \cap Y=Z\alpha \subseteq Y\alpha \subseteq Z$
 by Theorem \ref{T2.1}. Then exists a nonempty subset $Z^{\prime }$ of $Z$ with $|Z^{\prime }|=r$ such that $Z\alpha=X\alpha \cap Y=Z^{\prime }$.
 Clearly, $(Y\setminus Z)\alpha \subseteq Y\alpha \subseteq X\alpha \cap Z\subseteq X\alpha \cap Y=Z^{\prime }$ and so
\begin{equation} \label{E4.4}
(Y\setminus Z)\alpha \subseteq Z^{\prime }.
\end{equation}
We can also assert
\begin{equation} \label{E4.5}
(X\setminus Y)\alpha \subseteq Z^{\prime }\cup (X\setminus Y)
\end{equation}
(If not, there  exists some $y\in X\setminus Y$ such that $y\alpha \in Y\setminus Z^{\prime }$, then $y\alpha \in X\alpha \cap Y=Z^{\prime }$.  This is a contradiction).  Conversely, if a mapping $\alpha \in \mathcal{T}(X,Y,Z)$  satisfies $Z\alpha =Z^{\prime}$, formulas (\ref{E4.4}) and (\ref{E4.5}), it is easy to see that
$$X\alpha \cap Y=[Z\cup (Y\setminus Z)\cup (X\setminus Y)]\alpha \cap Y\subseteq [Z^{\prime }\cup (X\setminus Y)]\cap Y= Z^{\prime }= Z\alpha$$
since $Z^{\prime }\subseteq Z\subseteq Y\subseteq X$. Then, by Theorem \ref{T2.1}, we have $ \alpha \in {\rm Reg}(\mathcal{T}(X,Y,Z))$ and $Z\alpha =Z^{\prime }$. Hence, for each nonempty set $Z^{\prime }\subseteq Z$, we have
\begin{align*}
&\{\alpha \in {\rm Reg}(\mathcal{T}(X,Y,Z)): Z\alpha =Z^{\prime }\}  \nonumber\\
=&\{\alpha \in \mathcal{T}(X,Y,Z): \alpha \mbox {  satisfies }Z\alpha =Z^{\prime }, \mbox { formulas } (\ref{E4.4}) \mbox { and } (\ref{E4.5})\}. \label{eq:myalign}
\end{align*}
It follows that the number of maps $\alpha \in \mathcal{T}(X,Y,Z)$  satisfying $Z\alpha =Z^{\prime}$, formulas (\ref{E4.4}) and (\ref{E4.5}) is $r!S(k,r)r^{m-k}(n-m+r)^{n-m}$ since $|Z^{\prime }\cup (X\setminus Y)|=|X\setminus Y|+|Z^{\prime }|=n-m+r$, that is,
$$|\{\alpha \in {\rm Reg}(\mathcal{T}(X,Y,Z)): Z\alpha =Z^{\prime }\}|=r!S(k,r)r^{m-k}(n-m+r)^{n-m}.$$
Consequently, for each $r\in \mathbb{N}$ with $1\leq r\leq k$,
$$|\{\alpha \in {\rm Reg}(\mathcal{T}(X,Y,Z)): |Z\alpha |=r\}|=\binom {k}{r}r!S(k,r)r^{m-k}(n-m+r)^{n-m}$$
and so  Equation (\ref{E4.3}) holds by the summing up over all $r$.
\end{proof}

Since Theorem \ref{T4.5}, we obtain the following corollary which appears in \cite[Theorem 2.6 and Theorem 2.7]{N}.

\begin{corollary} \label{C4.6}  Let $|X|=n$, $|Y|=m$ and $|Z|=k$. Then
\\[8pt]
{\rm ($i$)} $|{\rm Reg}(\mathcal{\overline{T}}(X,Y))|=\sum _{r=1}^{m}\binom {m}{r}r!S(m,r)(n-m+r)^{n-m}.$
\\[8pt]
{\rm ($ii$)} $|{\rm Reg}(\mathcal{T}(X,Z))|=\sum _{r=1}^{k}\binom {k}{r}r!S(k,r)r^{n-k}.$
\end{corollary}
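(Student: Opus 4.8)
The plan is to obtain both identities directly from Theorem \ref{T4.5} by specializing the parameters $m$ and $k$, using the observation from Section \ref{intro} that $\mathcal{\overline{T}}(X,Y)$ and $\mathcal{T}(X,Z)$ are precisely the two boundary instances of $\mathcal{T}(X,Y,Z)$. Thus no new counting argument is needed: each part is a single parameter substitution into the general formula $|{\rm Reg}(\mathcal{T}(X,Y,Z))|=\sum_{r=1}^{k}\binom{k}{r}r!S(k,r)r^{m-k}(n-m+r)^{n-m}$, followed by an elementary simplification.

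For part ($i$), I would recall that $\mathcal{\overline{T}}(X,Y)=\mathcal{T}(X,Y,Z)$ exactly when $Z=Y$, so that $k=|Z|=|Y|=m$. Substituting $k=m$ into the formula of Theorem \ref{T4.5} turns the factor $r^{m-k}$ into $r^{0}=1$ for every $r$ in the summation range, and the upper summation limit becomes $m$. What survives is precisely $\sum_{r=1}^{m}\binom{m}{r}r!S(m,r)(n-m+r)^{n-m}$, which is the asserted formula. For part ($ii$), the case $\mathcal{T}(X,Z)=\mathcal{T}(X,Y,Z)$ corresponds to $Y=X$, hence $m=|Y|=|X|=n$. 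Setting $m=n$ in Theorem \ref{T4.5} collapses the factor $(n-m+r)^{n-m}$ to $r^{0}=1$, while $r^{m-k}$ becomes $r^{n-k}$; the remaining expression is $\sum_{r=1}^{k}\binom{k}{r}r!S(k,r)r^{n-k}$, as claimed.

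Since each part follows from one parameter substitution together with the trivial simplification $r^{0}=1$, there is no genuine obstacle here. The only point requiring a moment of care is to confirm that the two specializations of $\mathcal{T}(X,Y,Z)$ really do coincide, as sets of transformations, with $\mathcal{\overline{T}}(X,Y)$ (when $Z=Y$) and with $\mathcal{T}(X,Z)$ (when $Y=X$); but this identification is already recorded in the bulleted list of Section \ref{intro}, so it may simply be cited.
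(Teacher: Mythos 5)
Your proposal is correct and coincides with the paper's own (implicit) proof: the corollary is obtained exactly by specializing Theorem \ref{T4.5} with $Z=Y$ (so $k=m$, killing the factor $r^{m-k}$) for part ($i$), and with $Y=X$ (so $m=n$, killing the factor $(n-m+r)^{n-m}$) for part ($ii$). Both substitutions and simplifications check out, so nothing further is needed.
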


Moreover, we compute the cardinality of ${\rm E}(\mathcal{T}(X,Y,Z))$.

\begin{theorem} \label{T4.7}  Let $|X|=n$, $|Y|=m$ and $|Z|=k$. Then
\begin{equation} \label{E4.6}
|{\rm E}(\mathcal{T}(X,Y,Z))|=\sum _{r=1}^{n-m+k}\sum _{i=\max\{1,m-n+r\}}^{\min\{k,r\}}\binom {k}{i}\binom {n-m}{r-i}i^{m-i}r^{n-m-r+i}.
\end{equation}
\end{theorem}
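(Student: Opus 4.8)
The plan is to count idempotents directly from their structural description in Lemma~\ref{L3.4}. By that lemma, $\alpha \in {\rm E}(\mathcal{T}(X,Y,Z))$ precisely when $X\alpha \subseteq Z\cup(X\setminus Y)$ and $\alpha$ fixes every point of its image $A := X\alpha$. Thus an idempotent is completely determined by the choice of its image $A$ (a subset of $Z\cup(X\setminus Y)$ on which $\alpha$ acts as the identity) together with the assignment of each non-fixed point of $X$ to some element of $A$. I would stratify the count according to two parameters: $r := |A|$ and $i := |A\cap Z|$.

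First I would split the image as $A = A_Z\cup A_{X\setminus Y}$, where $A_Z := A\cap Z$ and $A_{X\setminus Y} := A\cap(X\setminus Y)$ (a disjoint union, since $Z\subseteq Y$), with $|A_Z|=i$ and $|A_{X\setminus Y}|=r-i$. The crucial observation is that every element of $Y$ is forced into $A_Z$: for $y\in Y$ we have $y\alpha\in Y\alpha\subseteq Z$ and $y\alpha\in X\alpha=A$, so $y\alpha\in A\cap Z=A_Z$. In particular $A_Z\supseteq Y\alpha\neq\varnothing$, which gives $i\geq 1$. Consequently the fixed points lying in $Y$ are exactly $A_Z$, while the fixed points in $X\setminus Y$ are exactly $A_{X\setminus Y}$.

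It then remains to count the free data. There are $\binom{k}{i}$ choices for $A_Z\subseteq Z$ and $\binom{n-m}{r-i}$ choices for $A_{X\setminus Y}\subseteq X\setminus Y$. Each of the $m-i$ points of $Y\setminus A_Z$ may be sent to any of the $i$ elements of $A_Z$, giving $i^{m-i}$ possibilities, and each of the $n-m-(r-i)$ points of $(X\setminus Y)\setminus A_{X\setminus Y}$ may be sent to any of the $r$ elements of $A$, giving $r^{n-m-r+i}$ possibilities. I would stress that surjectivity onto $A$ is automatic: every element of $A$ is a fixed point and hence already lies in the image, so no inclusion–exclusion correction is needed. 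Multiplying these four factors yields the summand $\binom{k}{i}\binom{n-m}{r-i}i^{m-i}r^{n-m-r+i}$, and summing over all admissible $i$ and $r$ gives Equation~(\ref{E4.6}).

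The remaining, and genuinely delicate, step is to pin down the ranges of summation. The constraints $A_Z\subseteq Z$, $A_Z\subseteq A$, $A_{X\setminus Y}\subseteq X\setminus Y$, together with $i\geq 1$, translate into $\max\{1,m-n+r\}\leq i\leq\min\{k,r\}$ (the lower bound $m-n+r$ arising from $r-i\leq n-m$), while $r$ ranges from $1$ up to $k+(n-m)=n-m+k$. I expect the main obstacle to be verifying that the boundary cases (for instance $i=m$ when $Z=Y$, or the occurrence of zero exponents) are consistent with the conventions forced by the formula, and confirming that the correspondence sending each idempotent to the pair $(A_Z,A_{X\setminus Y})$ together with its values on the non-fixed points is a genuine bijection, so that no idempotent is counted twice or omitted.
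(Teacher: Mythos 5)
Your proposal is correct and follows essentially the same route as the paper: both use Lemma~\ref{L3.4} to reduce the count to choosing the image $A\subseteq Z\cup(X\setminus Y)$ stratified by $r=|A|$ and $i=|A\cap Z|$, then counting the $i^{m-i}$ assignments of $Y\setminus A$ into $A\cap Z$ and the $r^{n-m-r+i}$ assignments of $(X\setminus Y)\setminus A$ into $A$, with the same summation bounds. Your write-up is in fact slightly more careful than the paper's, since you justify explicitly why $i\geq 1$, why points of $Y$ are forced into $A\cap Z$, and why surjectivity onto $A$ is automatic.
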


\begin{proof} Define an idempotent $\alpha$ with $|X\alpha |=r$, we have to choose a $r$-element set $X\alpha $, then exists $i\in \mathbb{N}$ such that $|X\alpha \cap Z|=i$ and $|X\alpha \cap (X\setminus Y)|=r-i$ by Lemma \ref{L3.4} (There are $\binom {k}{i}\binom {n-m}{r-i}$ different ways). Also, we have to define a mapping  $\varphi: X\setminus X\alpha \rightarrow X\alpha $ such that $\varphi(Y\setminus X\alpha)\subseteq Z$ and $\varphi((X\setminus Y)\setminus X\alpha) \subseteq X\alpha $ in an arbitrary way (This can be done in $i^{m-i}r^{n-m-r+i}$ different ways).  Note that $i$ meets $1\leq i\leq k$ and $0\leq r-i\leq n-m$. Then $\max\{1,m-n+r\}\leq i\leq \min\{k,r\}$. Hence
$$|\{\alpha \in {\rm E}(\mathcal{T}(X,Y,Z)):|X\alpha |=r\}|=\sum _{i=\max\{1,m-n+r\}}^{\min\{k,r\}}\binom {k}{i}\binom {n-m}{r-i}i^{m-i}r^{n-m-r+i}$$
by summing up over all $i$.  Note that
$$1\leq r=|X\alpha|\leq |Y\alpha |+|(X\setminus Y)\alpha|\leq |Z|+|X\setminus Y|=n-m+k.$$
Therefore  Equation (\ref{E4.6}) is now obtained by summing up over all $r$.
\end{proof}

Since Theorem \ref{T4.7}, we obtain the following corollary.
\begin{corollary} \label{C4.8}  Let $|X|=n$, $|Y|=m$ and $|Z|=k$. Then
\\[8pt]
{\rm($i$)} $|{\rm E}(\mathcal{\overline{T}}(X,Y))|=\sum _{r=1}^{n}\sum _{i=\max\{1,m-n+r\}}^{\min\{m,r\}}\binom {m}{i}\binom {n-m}{r-i}i^{m-i}r^{n-m-r+i}.$
\\[8pt]
{\rm($ii$)} $|{\rm E}(\mathcal{T}(X,Z))|=\sum _{r=1}^{k}\binom {k}{r}r^{n-r}.$
\\[8pt]
{\rm($iii$)} $|{\rm E}(\mathcal{T}(X))|=\sum _{r=1}^{n}\binom {n}{r}r^{n-r}.$
\end{corollary}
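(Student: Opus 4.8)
The plan is to obtain all three identities by specializing the parameters in Theorem \ref{T4.7}, exploiting the correspondences recorded in Section \ref{intro}: $\mathcal{\overline{T}}(X,Y)$ is the case $Z=Y$ (hence $k=m$), $\mathcal{T}(X,Z)$ is the case $Y=X$ (hence $m=n$), and $\mathcal{T}(X)$ is the case $Z=Y=X$ (hence $k=m=n$). Since Lemma \ref{L3.4} and Theorem \ref{T4.7} are valid for arbitrary $Z\subseteq Y\subseteq X$, I may substitute these numerical constraints directly into Equation (\ref{E4.6}) and simplify in each case.

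For part ($i$) I would set $k=m$ in Equation (\ref{E4.6}). The upper limit $n-m+k$ of the outer sum becomes $n$, the binomial $\binom{k}{i}$ becomes $\binom{m}{i}$, and the upper limit $\min\{k,r\}$ of the inner sum becomes $\min\{m,r\}$; every other factor is untouched. This reproduces the stated expression for $|{\rm E}(\mathcal{\overline{T}}(X,Y))|$ with no further work.

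For part ($ii$) I would set $m=n$. The key point, and the only step that is not purely mechanical, is that the factor $\binom{n-m}{r-i}$ becomes $\binom{0}{r-i}$, which equals $1$ when $i=r$ and vanishes otherwise; this collapses the inner sum to its single surviving term $i=r$. One checks that the index bounds are consistent with this, since $\max\{1,m-n+r\}=\max\{1,r\}=r$ and $\min\{k,r\}=r$ for $1\le r\le k$. Substituting $i=r$ and $m=n$ into the surviving summand gives $\binom{k}{r}r^{n-r}\cdot r^{\,0}=\binom{k}{r}r^{n-r}$, and because the outer limit is now $n-m+k=k$, summing over $r$ yields exactly $\sum_{r=1}^{k}\binom{k}{r}r^{n-r}$.

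Finally, for part ($iii$) I would specialize to $k=m=n$, which is simultaneously the case $k=n$ of ($ii$) and the case $m=n$ of ($i$). Taking $k=n$ in the formula just derived in ($ii$) immediately gives $|{\rm E}(\mathcal{T}(X))|=\sum_{r=1}^{n}\binom{n}{r}r^{n-r}$, as required. The whole argument is thus a short sequence of parameter substitutions; I expect the only place deserving an explicit word of explanation to be the collapse of the double sum in ($ii$) forced by the vanishing of $\binom{0}{r-i}$, which I would flag to keep the reduction transparent.
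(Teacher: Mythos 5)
Your proposal is correct and coincides with the paper's own (implicit) argument: the paper likewise derives Corollary \ref{C4.8} purely by specializing Equation (\ref{E4.6}) of Theorem \ref{T4.7} to the cases $k=m$, $m=n$, and $k=m=n$, with the collapse of the inner sum via $\binom{0}{r-i}$ in part ($ii$) handled exactly as you describe. Your explicit verification of the index bounds ($\max\{1,m-n+r\}=r$ and $\min\{k,r\}=r$ for $1\le r\le k$) is a welcome detail the paper leaves to the reader.
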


\section*{Acknowledgements}
The authors would like to thank the anonymous reviewers for their comments that helped improve this paper. This work was supported by the doctoral research start-up fund of Guiyang University (GYU-KY-2023).

\noindent Accepted: 7.2.2023

\end{document}